\documentclass[a4paper,10pt]{amsart}
\textwidth16cm\textheight21cm\oddsidemargin-0.1cm\evensidemargin-0.1cm
\parindent0.0em
\usepackage{amsmath,amsfonts,amssymb,amscd,color,graphicx}
\usepackage[colorlinks]{hyperref}
\definecolor{linkblue}{RGB}{1,1,190}
\definecolor{citered}{RGB}{190,1,1}
\hypersetup{linkcolor=linkblue,urlcolor=linkblue,citecolor=citered}

\theoremstyle{plain}
\newtheorem{theorem}{\bf Theorem}[section]
\newtheorem{proposition}[theorem]{\bf Proposition}
\newtheorem{lemma}[theorem]{\bf Lemma}
\newtheorem{corollary}[theorem]{\bf Corollary}

\theoremstyle{definition}
\newtheorem{example}[theorem]{\bf Example}
\newtheorem{definition}[theorem]{\bf Definition}
\newtheorem{remark}[theorem]{\bf Remark}

\providecommand{\U}[1]{\protect\rule{.1in}{.1in}}
\numberwithin{equation}{section}

\begin{document}
\title{Multiplicative lattices with absorbing factorization}

\author{Andreas Reinhart}
\address{Institut f\"ur Mathematik und Wissenschaftliches Rechnen, Karl-Franzens-Universit\"at Graz, NAWI Graz, Heinrichstra{\ss}e 36, 8010 Graz, Austria}
\email{andreas.reinhart@uni-graz.at}

\author{G\"ul\c{s}en Ulucak}
\address{Department of Mathematics, Faculty of Science, Gebze Technical University, Gebze, Kocaeli, Turkey}
\email{gulsenulucak58@gmail.com, gulsenulucak@gtu.edu.tr}
	
\keywords{$1$-absorbing prime element, $2$-absorbing element, C-lattice, principally generated lattice.}
\subjclass[2010]{06F10, 06F05, 13A15}

\begin{abstract}
In \cite{YaNiNi21}, Yassine et al. introduced the notion of $1$-absorbing prime ideals in commutative rings with nonzero identity. In this article, we examine the concept of $1$-absorbing prime elements in C-lattices. We investigate the C-lattices in which every element is a finite product of $1$-absorbing prime elements (we denote them as OAFLs for short). Moreover, we study C-lattices having $2$-absorbing factorization (we denote them as TAFLs for short).
\end{abstract}

\maketitle

\section{Introduction}\label{1}

Let $L$ be a set together with an inner binary operation $\cdot$ on $L$ and a partial order $\leq$ on $L$ such that $(L,\cdot)$ is a monoid (i.e., $(L,\cdot)$ a commutative semigroup with identity) and $(L,\leq)$ is a complete lattice (i.e., each subset of $L$ has both a supremum and an infimum with respect to $\leq$). For each subset $E\subseteq L$, we let $\bigvee E$ denote the supremum of $E$, called the {\it join} of $E$ and we let $\bigwedge E$ denote the infimum of $E$, called the {\it meet} of $E$. For elements $a,b\in L$, let $a\vee b=\bigvee\{a,b\}$ and let $a\wedge b=\bigwedge\{a,b\}$. Moreover, set $1=\bigvee L$ and set $0=\bigwedge L$. We say that $(L,\cdot,\leq)$ is a {\it multiplicative lattice} if for all $x\in L$ and $E\subseteq L$, it follows that $1x=x$ and $x(\bigvee E)=\bigvee\{xe\mid e\in E\}$.

\medskip
We recall a few important situations in which multiplicative lattices occur. In what follows, we use the definitions of star operations, ideal systems and the specific star operations/ideal systems $v$, $t$ and $w$ without further mention. For more information on star operations see \cite{El19} and for more information on ideal systems see \cite{Ha98}. A profound introduction and study of the $w$-operation can be found in \cite{WaKi16}.

\begin{itemize}
\item It is well-known that if $R$ is a commutative ring with identity, $L$ is the set of ideals of $R$ and $\cdot:L\times L\rightarrow L$ is the ideal multiplication on $L$, then $(L,\cdot,\subseteq)$ is a multiplicative lattice.
\item Let $D$ be an integral domain and let $\ast$ be a star operation on $D$. Let $L$ be the set of $\ast$-ideals of $D$ together with the $\ast$-multiplication $\cdot_{\ast}:L\times L\rightarrow L$. Then $(L,\cdot_{\ast},\subseteq)$ is a multiplicative lattice.
\item Let $H$ be a commutative cancellative monoid and let $r$ be an ideal system on $H$. Let $L$ be the set of $r$-ideals of $H$ and let $\cdot_r:L\times L\rightarrow L$ be the $r$-multiplication. Then $(L,\cdot_r,\subseteq)$ is a multiplicative lattice.
\end{itemize}

\medskip
Let $L$ be a multiplicative lattice and let $e\in L$. For $a,b\in L$, we set $(a:b)=\bigvee\{x\in L\mid xb\leq a\}$. Then $e$ is called {\it weak meet principal} if $a\wedge e=(a:e)e$ for each $a\in L$ and $e$ is called {\it weak join principal} if $(be:e)=(0:e)\vee b$ for each $b\in L$. Furthermore, $e$ is said to be {\it meet principal} if $a\wedge be=((a:e)\wedge b)e$ for all $a,b\in L$ and $e$ is said to be {\it join principal} if $((a\vee be):e)=(a:e)\vee b$ for all $a,b\in L$. We say that $e$ is {\it weak principal} if $e$ is both weak meet principal and weak join principal. Finally, $e$ is said to be {\it principal} \cite{Di62} if $e$ is both meet principal and join principal. Observe that each principal element is weak principal. An element $a\in L$ is said to be {\it compact} if for each subset $F\subseteq L$ with $a\leq\bigvee F$, it follows that $a\leq\bigvee E$ for some finite subset $E$ of $F$. A subset $D\subseteq L$ is called {\it multiplicatively closed} if $1\in D$ and $xy\in D$ for each $x,y\in D$. A multiplicative lattice $L$ is called a {\it C-lattice} if $L$ is generated under joins by a multiplicatively closed subset $C$ of compact elements. Note that a finite product of compact elements in a C-lattice is again compact. By $L_*$ we denote the set of all compact elements of $L$. We say that $L$ is {\it principally generated} if every element of $L$ is the join of a set of principal elements of $L$. It is well-known (see \cite[Theorem 1.3]{An76}) that each principal element of a C-lattice is compact. Moreover, $L$ is said to be {\it join-principally generated} if each element of $L$ is the join of a set of join principal elements of $L$.

\medskip
Let $R$ be a commutative ring with identity, let $D$ be an integral domain, let $H$ be a commutative cancellative monoid, let $\ast$ be a star operation on $D$ and let $r$ be an ideal system on $H$. Note that the lattice of ideals of $R$ is a principally generated C-lattice. The lattice of $\ast$-ideals of $D$ is a C-lattice if and only if $\ast$ is a star operation of finite type. In analogy, it follows that the lattice of $r$-ideals of $H$ is a C-lattice if and only if $r$ is a finitary ideal system. Observe that the lattice of $v$-ideals of $D$ (or of $H$) can fail to be a C-lattice. Also note that even if $\ast$ is of finite type (resp. $r$ is finitary), then the lattice of $\ast$-ideals of $D$ (resp. the lattice of $r$-ideals of $H$) need not be (join-)principally generated. For instance, the $t$-operation is of finite type (resp. the $t$-system is finitary), but the lattice of $t$-ideals is (in general) not (join-)principally generated. We also want to emphasize that the lattice of $w$-ideals of $D$ (resp. of $H$) is a principally generated C-lattice.

\medskip
An element $a\in L$ is said to be {\it proper} if $a<1$, it is called {\it nilpotent} if $a^n=0$ for some $n\in\mathbb{N}$ and it is called {\it comparable} if $a\leq b$ or $b\leq a$ for each $b\in L$. For each $a\in L$, $L/a=\{b\in L\mid a\leq b\}$ is a multiplicative lattice with the multiplication $c\circ d=(cd)\vee a$ for elements $c,d\in L/a$. A proper element $p\in L$ is called {\it prime} if $ab\leq p$ implies $a\leq p$ or $b\leq p$ for all $a,b\in L$. Let ${\rm Spec}(L)$ denote the set of all prime elements of $L$. A proper element $m\in L$ is said to be {\it maximal} in $L$ if for each $x\in L$, $m<x\leq 1$ implies $x=1$. One can easily see that maximal elements are prime. For each $a\in L$, let $\min(a)$ be the set of prime elements of $L$ that are minimal above $a$. The lattice $L$ is called a {\it lattice domain} if $0$ is a prime element. ${\rm J}(L)$ is defined as the meet of all maximal elements of $L$. For $a\in L$, we define $\sqrt{a}=\bigwedge\{p\in L\mid p$ is prime and $a\leq p\}$. Note that in a C-lattice $L$, $\sqrt{a}=\bigwedge\{p\in L\mid a\leq p$ is a minimal prime over $a\}=\bigvee\{x\in L_*\mid x^n\leq a$ for some $n\in\mathbb{N}\}$. A proper element $q\in L$ is called {\it primary} if $ab\leq q$ implies $a\leq q$ or $b\leq\sqrt{q}$ for every $a,b\in L$. It is well-known that C-lattices can be localized at arbitrary multiplicatively closed subsets $S$ of compact elements as follows. The localization of $a\in L$ at $S$ is defined as $a_S=\bigvee\{x\in L\mid xs\leq a$ for some $s\in S\}$. The multiplication on $L_S=\{a_S\mid a\in L\}$ is defined by $c\circ_S d=(cd)_S$ for all $c,d\in L_S$. Let $p\in L$ be a prime element and let $S=\{x\in L_*\mid x\nleq p\}$. Then the set $S$ is a multiplicatively closed subset of $L$. In this case, the localization $L_S$ is denoted by $L_p$. It is well-known that $(L_p)_*=\{a_p\in L_p\mid a\in L_*\}$. Using this, it can be shown that if $L$ is a (principally generated) C-lattice, then $L_p$ is also a (principally generated) C-lattice for any prime element $p\in L$ (see \cite[Theorem 2.9]{An74}). It can also be proved that in a C-lattice $L$, for all $a,b\in L$, $(ab)_m=(a_mb_m)_m$ for each maximal element $m\in L$ and also, $a=b$ if and only if $a_n=b_n$ for all maximal elements $n\in L$. For more information on localizations, see \cite{An74,An76,Di62,JaJo95}.

\medskip
In \cite{YaNiNi21}, the authors introduced the concept of $1$-absorbing prime ideals in commutative rings with identity. These ideals are generalizations of prime ideals and many authors studied them from different points of view (see \cite{BoTaTeKo22}). The first aim of this paper is to study $1$-absorbing prime elements in C-lattices. Another (well-known) generalization of $1$-absorbing prime ideals are $2$-absorbing ideals. They have first been mentioned in \cite{Ba07} and in \cite{JaTeYe14}, the authors introduced $2$-absorbing elements in multiplicative lattices. A proper element $p\in L$ is called a {\it $2$-absorbing element} (also called a TA-element throughout this paper) if $abc\leq p$ implies that $ab\leq p$ or $bc\leq p$ or $ac\leq p$ for all $a,b,c\in L$.

The aforementioned concepts are part of the more general definition, namely that of $n$-absorbing ideals. These types of ideals were introduced and studied by Anderson and Badawi (see \cite{AnBa11}). It turns out that $n$-absorbing ideals are not just interesting objects in multiplicative ideal theory, but also in factorization theory. For instance, there is an important connection between $n$-absorbing ideals and the $\omega$-invariant in factorization theory (see \cite{AnBa11}). For a profound discussion of the $\omega$-invariant, we refer to \cite{GeHa08}.

We want to emphasize that the commutative rings in which each ideal is a finite product of $1$-absorbing prime ideals (resp. $2$-absorbing ideals, resp. $n$-absorbing ideals) have already been studied (see \cite{AhDuKh20,ElIsMaRe21,JaTeYe14}). The main goal of this paper is to consider principally generated C-lattices in which various types of elements can be written as finite products of $1$-absorbing prime elements or $2$-absorbing elements.

\medskip
We continue with a few more basic definitions that will be needed in the sequel. $L$ is said to be a {\it field} if $L=\{0,1\}$ and $L$ is called a {\it quasi-local} lattice if $1$ is compact and $L$ has a unique maximal element. The dimension of $L$, denoted by $\dim(L)$, is defined to be $\sup\{n\in\mathbb{N}\mid$ there exists a strict chain of prime elements of $L$ of length $n\}$. If $\dim(L)=0$, then $L$ is said to be a {\it zero-dimensional} lattice. Note that $L$ is a zero-dimensional lattice if and only if every prime element of $L$ is maximal. We say that a multiplicative lattice is {\it Noetherian} if every element of $L$ is compact (see \cite[page 352]{Ja03}). A multiplicative lattice is said to be a {\it Pr\"ufer} lattice if every compact element of $L$ is principal. (For more information about Pr\"ufer lattices, see \cite[Theorem 3.4]{An76}.) A {\it ZPI-lattice} is a multiplicative lattice in which every element is a finite product of prime elements \cite{Ja02}. A multiplicative lattice $L$ is said to be a {\it Q-lattice} if every element is a finite product of primary elements \cite{NaAl97}.

\medskip
Our paper is organized as follows. In Section~\ref{2}, we study the concept of $1$-absorbing prime elements (OA-elements). The relationships among prime elements, primary elements, and TA-elements are studied in Examples~\ref{Example 2.2} and~\ref{Example 2.3}. In Propositions~\ref{Proposition 2.6} and~\ref{Proposition 2.7} and Corollary~\ref{Corollary 2.9}, we demonstrate that the concepts of prime elements and OA-elements coincide in C-lattices that are not quasi-local. In Section~\ref{3}, we study C-lattices whose elements have a TA-factorization. We call a C-lattice a {\it TA-factorization lattice} (abbreviated as {\rm TAFL}) if every element possesses a TA-factorization. In Proposition~\ref{Proposition 3.4}, we show that $\dim(L)\leq 1$ if $L$ is a principally generated TAFL. In Theorem~\ref{Theorem 3.5}, we prove that a TAFL is a ZPI-lattice domain if it is a Pr\"ufer lattice domain. Then we study C-lattices for which all compact elements have a factorization into TA-elements, called {\it CTAFLs}. Finally, we explore the C-lattices for which all principal elements have a factorization into TA-elements, which we call {\it PTAFLs}. In Theorem~\ref{Theorem 3.11}, we show that if $(L,m)$ is a quasi-local principally generated C-lattice domain, then $L$ is a TAFL if and only if $L$ is a PTAFL and $\dim(L)\leq 1$. In Section~\ref{4}, we study the factorization lattices with respect to the OA-element concept, similar to Section~\ref{3}. We study {\it OA-factorization lattices} (abbreviated as {\rm OAFLs}) which are C-lattices in which every element possesses an OA-factorization. Then, we examine the C-lattices for which all compact elements have a factorization into OA-elements and we call them {\it COAFLs}. Finally, we explore the C-lattices for which all principal elements have a factorization into OA-elements, called {\it POAFLs}. Among the many results, in Theorem~\ref{Theorem 4.13}, we characterize OAFLs, COAFLs and lattices for which the join of any two principal elements has an OA-factorization. In Theorem~\ref{Theorem 4.13}, we also see that if $L$ is a principally generated OAFL, then it satisfies one of the following conditions.

\begin{enumerate}
\item[\textbf{i.}] $L$ is a ZPI-lattice.
\item[\textbf{ii.}] $L$ is a quasi-local lattice, $m^2$ is comparable and $m$ is a nilpotent element.
\item[\textbf{iii.}] $L$ is a quasi-local lattice domain, $m^2$ is comparable and $\bigwedge_{n\in\mathbb{N}} m^n=0$.
\end{enumerate}

In Theorem~\ref{Theorem 4.14}, we show that $L$ is a ZPI-lattice if and only if $L$ is a Pr\"ufer OAFL if and only if $L$ is a Pr\"ufer POAFL. Theorem~\ref{Theorem 4.15} establishes some relationships between OAFLs, COAFLs, POAFLs, TAFLs, CTAFLs and PTAFLs.

\section{On 1-absorbing prime elements of C-lattices}\label{2}

\begin{definition}\label{Definition 2.1}
Let $L$ be a C-lattice. A proper element $x\in L$ is called a {\it $1$-absorbing prime element} or an {\it OA-element} if for all $a,b,c\in L\setminus\{1\}$, $abc\leq x$ implies that $ab\leq x$ or $c\leq x$.
\end{definition}

It follows immediately from the definition that every OA-element is both a TA-element and a primary element. Moreover, every prime element is an OA-element. We infer that the class of OA-elements of $L$ lies between the classes of prime elements and TA-elements and also between the classes of prime elements and primary elements.

Let $L$ be a C-lattice and let $a\in L$. We obtain the following irreversible right arrows:

\begin{enumerate}
\item[(1)] $a$ is a prime element $\Rightarrow$ $a$ is an OA-element $\Rightarrow$ $a$ is a primary element.
\item[(2)] $a$ is a prime element $\Rightarrow$ $a$ is an OA-element $\Rightarrow$ $a$ is a TA-element.
\end{enumerate}

We give some examples to show that these arrows are not reversible.

\begin{example}\label{Example 2.2}
\textnormal{\{This example is inspired by \cite[Example 7]{DuEp22}\}}. Let $L$ be a C-lattice, having underlying set $\{0,1,a,b,c,d\}$ ordered by $a\leq b\leq d$ and $a\leq c\leq d$, with multiplication $xy=a$ for all $x,y\in\{a,b,c,d\}$. The prime elements of $L$ are $0$ and $d$. Moreover, $L$ is a quasi-local lattice. Note that $b$ is an OA-element of $L$ that is not a prime element. In particular, $b$ is a primary TA-element of $L$.
\end{example}

\begin{example}\label{Example 2.3}
We demonstrate that, in general, neither TA-elements nor primary elements are OA-elements. Let ${\rm I}(\mathbb{Z})$ be the lattice of ideals of $\mathbb{Z}$. Note that $(15)$ is a TA-element of ${\rm I}(\mathbb{Z})$ that is not an OA-element of ${\rm I}(\mathbb{Z})$. Furthermore, $(8)$ is a primary element of ${\rm I}(\mathbb{Z})$ that fails to be an OA-element of ${\rm I}(\mathbb{Z})$.
\end{example}

Next we provide a lemma whose second part was already proved in \cite[Lemma 1]{JaTeYe14}. For the sake of completeness, we include its proof.

\begin{lemma}\label{Lemma 2.4}
Let $L$ be a C-lattice and let $x\in L$ be proper.
\begin{enumerate}
\item[(1)] $x$ is an OA-element if and only if for all $a,b,c\in L_*\setminus\{1\}$, $abc\leq x$ implies that $ab\leq x$ or $c\leq x$.
\item[(2)] $x$ is a TA-element if and only if for all $a,b,c\in L_*$, $abc\leq x$ implies that $ab\leq x$ or $ac\leq x$ or $bc\leq x$.
\end{enumerate}
\end{lemma}

\begin{proof}
(1) ($\Rightarrow$) This is clear.

\smallskip
($\Leftarrow$) Let $a,b,c\in L\setminus\{1\}$ be such that $abc\leq x$ and $ab\nleq x$. We show that $c\leq x$ to complete the proof. Since $ab\nleq x$, there are some $a_1,b_1\in L_*$ such that $a_1\leq a$, $b_1\leq b$ and $a_1b_1\nleq x$. Let $c^{\prime}\in L_*$ be such that $c^{\prime}\leq c$. Since $a_1b_1c^{\prime}\leq abc\leq x$ and $a_1b_1\nleq x$, we infer that $c^{\prime}\leq x$. Consequently, $c\leq x$ (since $c$ is the join of the set of all $c_1\in L_*$ with $c_1\leq c$). Therefore, $x$ is an OA-element.

\medskip
(2) ($\Rightarrow$) This is obvious.

\smallskip
($\Leftarrow$) Let $a,b,c\in L$ be such that $abc\leq x$, $ab\nleq x$ and $ac\nleq x$. It remains to prove that $b^{\prime}c^{\prime}\leq x$ for all $b^{\prime},c^{\prime}\in L_*$ such that $b^{\prime}\leq b$ and $c^{\prime}\leq c$. Since $ab\nleq x$, there are some $a_1,b_1\in L_*$ such that $a_1\leq a$, $b_1\leq b$ and $a_1b_1\nleq x$. Since $ac\nleq x$, there are some $a_2,c_1\in L_*$ such that $a_2\leq a$, $c_1\leq c$ and $a_2c_1\nleq x$. Let $b^{\prime},c^{\prime}\in L_*$ be such that $b^{\prime}\leq b$ and $c^{\prime}\leq c$. Set $\overline{a}=a_1\vee a_2$, $\overline{b}=b_1\vee b^{\prime}$ and $\overline{c}=c_1\vee c^{\prime}$. Then $\overline{a},\overline{b},\overline{c}\in L_*$, $\overline{a}\overline{b}\nleq x$ and $\overline{a}\overline{c}\nleq x$. Since $\overline{a}\overline{b}\overline{c}\leq abc\leq x$, we infer that $b^{\prime}c^{\prime}\leq \overline{b}\overline{c}\leq x$. Consequently, $x$ is a TA-element.
\end{proof}

\begin{lemma}\label{Lemma 2.5}
Let $L$ be a C-lattice. Then, $u\vee w\neq 1$ for all distinct proper elements $u,w\in L$ if and only if $L$ is quasi-local.
\end{lemma}

\begin{proof}
($\Rightarrow$): Let $L$ be not quasi-local. There are two distinct maximal elements $m_1,m_2\in L$. Observe that $m_1\vee m_2=1$.

($\Leftarrow$): Let $L$ be quasi-local with maximal element $m$. Clearly, $u\vee w\leq m<1$ for all distinct proper elements $u,w\in L$.
\end{proof}

\begin{proposition}\label{Proposition 2.6}
Let $L$ be a C-lattice. If $L$ is not quasi-local, then each OA-element of $L$ is prime.
\end{proposition}

\begin{proof}
Suppose that $x$ is an OA-element of $L$ that is not a prime. It remains to show that $L$ is quasi-local. By the assumption, there are some $c,d\in L$ such that $cd\leq x$, $c\nleq x$ and $d\nleq x$. If $u\vee w\neq 1$ for all distinct proper elements $u,w\in L$, then we are done by Lemma~\ref{Lemma 2.5}. Assume that $u\vee w=1$ for two distinct proper elements $u,w\in L$. Since $wcd\leq x$ and $d\nleq x$, it follows that $wc\leq x$. Similarly, since $ucd\leq x$ and $d\nleq x$, we obtain that $uc\leq x$. Consequently, $uc\vee wc=(u\vee w)c\leq x$, and hence $c=1c=(u\vee w)c\leq x$, a contradiction. Therefore, $L$ is quasi-local.
\end{proof}

\begin{proposition}\label{Proposition 2.7}
Let $(L,m)$ be a quasi-local C-lattice and let $x\in L$ be proper. Then $x$ is an OA-element if and only if $x$ is a prime element or $m^2\leq x<m$.
\end{proposition}

\begin{proof}
($\Rightarrow$) Without restriction, we can assume that $x$ is not a prime element of $L$. Clearly, there are two proper elements $a,b\in L$ such that $ab\leq x$, $a\nleq x$ and $b\nleq x$. Set $y=m^2$. Note that $yab\leq ab\leq x$. Since $a$, $b$ and $y$ are proper elements of $L$ and $b\nleq x$, we have that $ya\leq x$, and hence $mma\leq x$. Moreover, since $a$ and $m$ are proper elements of $L$ and $a\nleq x$, this implies that $m^2=mm\leq x$. Since $x$ is not a prime element of $L$, it is obvious that $x<m$.

($\Leftarrow$) If $x$ is a prime element of $L$, then clearly $x$ is an OA-element of $L$. Now let $m^2\leq x<m$. Let $a,b,c\in L$ be proper such that $abc\leq x$ and $c\nleq x$. Note that $a\leq m$ and $b\leq m$, and hence $ab\leq m^2\leq x$. Therefore, $x$ is an OA-element.
\end{proof}

\begin{corollary}\label{Corollary 2.8}
Let $L$ be a C-lattice and let $x\in L$ be an OA-element of $L$.
\begin{enumerate}
\item[(1)] $\sqrt{x}$ is a prime element of $L$ with $(\sqrt{x})^2\leq x$.
\item[(2)] $(x:a)$ is a prime element of $L$ for each proper $a\in L$ with $a\nleq x$.
\end{enumerate}
\end{corollary}

\begin{proof}
(1) If $x$ is a prime element of $L$, then clearly $\sqrt{x}=x$ is a prime element of $L$ with $(\sqrt{x})^2=x^2\leq x$. Now let $x$ be not a prime element of $L$. It follows from Propositions~\ref{Proposition 2.6} and~\ref{Proposition 2.7} that $L$ is quasi-local with maximal element $m$ and $m^2\leq x<m$. We infer that $m=\sqrt{x}$, and hence $\sqrt{x}$ is a prime element of $L$ with $(\sqrt{x})^2=m^2\leq x$.
	
\medskip
(2) Let $b,c\in L$ be such that $bc\leq (x:a)$. Then $abc\leq x$. We obtain that $ab\leq x$ or $c\leq x$. Therefore, $b\leq (x:a)$ or $c\leq (x:a)$.
\end{proof}

As another consequence of Propositions~\ref{Proposition 2.6} and~\ref{Proposition 2.7}, we give the following corollary without proof.

\begin{corollary}\label{Corollary 2.9}
Let $L$ be a C-lattice. Then there is an OA-element of $L$ that is not prime if and only if $L$ is quasi-local with maximal element $m$ such that $m^2\neq m$.
\end{corollary}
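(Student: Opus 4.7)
The plan is to derive both directions directly from Propositions~\ref{Proposition 2.7} and~\ref{Proposition 2.8}, with essentially no extra work beyond choosing a witness element for the backward direction.

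For the forward implication, suppose $x \in L$ is an OA-element that is not prime. Proposition~\ref{Proposition 2.7} immediately gives that $L$ is quasi-local; let $m$ denote its unique maximal element. Applying Proposition~\ref{Proposition 2.8} to the proper element $x$ in this quasi-local setting, and using that $x$ is not prime, we obtain $m^2 \leq x < m$. If we had $m^2 = m$, this would force $m \leq x < m$, a contradiction; hence $m^2 \neq m$.

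For the backward implication, suppose $(L,m)$ is quasi-local with $m^2 \neq m$. Since $m^2 \leq m$, the hypothesis $m^2 \neq m$ gives $m^2 < m$, so in particular $m^2 < 1$, i.e., $m^2$ is proper. Take $x = m^2$. Then trivially $m^2 \leq x < m$, so Proposition~\ref{Proposition 2.8} tells us that $x$ is an OA-element. It remains to see that $x = m^2$ is not prime: if it were, then from $m \cdot m \leq m^2$ we would conclude $m \leq m^2$, contradicting $m^2 < m$. This produces the required OA-element that is not prime.

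The argument is essentially a bookkeeping step; the only mild subtlety is that one must choose a concrete non-prime OA-element for the backward direction, and $m^2$ is the natural candidate. No additional obstacle is anticipated, which is why the authors present the corollary without proof.
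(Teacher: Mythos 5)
Your proof is correct and follows exactly the route the paper intends: the corollary is stated without proof as a consequence of Propositions~\ref{Proposition 2.7} and~\ref{Proposition 2.8}, and your argument (including the choice of $m^2$ as the witness non-prime OA-element, which is legitimate since $m^2\leq m$ and $m^2\neq m$ force $m^2<m$) fills in precisely those details.
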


\begin{proposition}\label{Proposition 2.10}
Let $L$ be a principally generated quasi-local Noetherian lattice with maximal element $m$. Then every OA-element is prime if and only if $L$ is a field.
\end{proposition}

\begin{proof}
($\Rightarrow$) Since every OA-element is prime, then we obtain that $m^2=m$. Therefore, $m=0$ by \cite[Theorem 1.4]{An76}, and thus $L$ is a field.

\medskip
($\Leftarrow$) This is clear.
\end{proof}

Now, we give a relation between OA-elements and lattice domains.

\begin{proposition}\label{Proposition 2.11}
Let $L$ be a C-lattice. Then $0$ is an OA-element of $L$ if and only if $L$ is a lattice domain or $L$ is quasi-local with maximal element $m$ such that $m^2=0$.
\end{proposition}

\begin{proof}
($\Rightarrow$) Let $0$ be an OA-element of $L$ and let $L$ be not a lattice domain. Then $0$ is not prime, and thus $L$ is quasi-local with maximal element $m$ by Proposition~\ref{Proposition 2.6}. We infer by Proposition~\ref{Proposition 2.7} that $m^2=0$.

($\Leftarrow$) This is an immediate consequence of Proposition~\ref{Proposition 2.7}.
\end{proof}

\begin{proposition}\label{Proposition 2.12}
Let $L$ be a principally generated C-lattice and set $m={\rm J}(L)$. The following statements are equivalent.
\begin{enumerate}
\item[(1)] Every proper element of $L$ is an OA-element.
\item[(2)] Every proper principal element of $L$ is an OA-element.
\item[(3)] $L$ is quasi-local and $m^2=0$.
\end{enumerate}
\end{proposition}

\begin{proof}
(1) $\Rightarrow$ (2) This is obvious.

\medskip
(2) $\Rightarrow$ (3) Assume that $L$ is not a quasi-local lattice. Then each proper principal element is a prime element. Note that $L$ is a lattice domain. Let $x\in L$ be a proper principal element. It follows that $x^2$ is a principal prime element. We conclude that $x=x^2$, and thus $1=x\vee (0:x)$. Since $x$ is proper, we have that $x=0$. Consequently, $L$ is a field. But this contradicts the fact that $L$ is not a quasi-local lattice. This implies that $L$ is quasi-local with maximal element $m$. We infer that $0$ is prime or $m^2=0$ by Proposition~\ref{Proposition 2.7}. Suppose that $m^2\neq 0$. Then there is a nonzero principal element $c\in L$ with $c\leq m^2$. Proposition~\ref{Proposition 2.7} implies that $c^2$ is prime element or $m^2\leq c^2$. If $c^2$ is prime, then $c^2=c$. If $m^2\leq c^2$, then $m^2\leq c^2\leq c\leq m^2$, and hence $c^2=c$. In any case, we obtain that $c^2=c$, and thus $1=c\vee (0:c)$, since $c$ is principal. Since $L$ is quasi-local, it follows that $c=1$, a contradiction. Therefore, $m^2=0$.

\medskip
(3) $\Rightarrow$ (1) This follows from Proposition~\ref{Proposition 2.7}.
\end{proof}

In general, in Proposition~\ref{Proposition 2.12}, statement (1) does not imply statement (3). To see this, we need to examine Example~\ref{Example 2.2}. We have that each proper element of $L$ in Example~\ref{Example 2.2} is an OA-element, and $L$ is a quasi-local C-lattice, but $m^2=a\neq 0$. Also note that the lattice $L$ in Example~\ref{Example 2.2} has no (join) principal elements except $0$ and $1$.

\begin{proposition}\label{Proposition 2.13}
Let $(L,m)$ be a quasi-local principally generated C-lattice such that $m^2$ is comparable. The following statements are equivalent.
\begin{enumerate}
\item[(1)] For each two principal elements $x,y\in L$ with $m^2\leq x$ and $m^2\leq y$, we have $x\leq y$ or $y\leq x$.
\item[(2)] $m$ is principal or $m=m^2$.
\item[(3)] If $a$ is an OA-element of $L$, then $a$ is prime or $a=m^2$.
\item[(4)] There are at most two elements between $m^2$ and $m$.
\end{enumerate}
\end{proposition}

\begin{proof}
(1) $\Rightarrow$ (2) Let $m\not=m^2$. There is some proper principal $x\in L$ with $x\nleq m^2$ (since $L$ is principally generated). We obtain that $m^2\leq x$. It remains to show that $x=m$. Assume that $x<m$. There is some proper principal $y\in L$ such that $y\nleq x$. Since $m^2\leq x$, we infer that $y\nleq m^2$, and hence $m^2\leq y$. This implies that $x<y$. Consequently, $x=yz$ for some proper $z\in L$, and thus $x\leq m^2$, a contradiction.

\medskip
(2) $\Rightarrow$ (3) Let $m$ be principal or $m=m^2$. Let $a\in L$ be an OA-element that is not a prime element. It follows from Proposition~\ref{Proposition 2.7} that $m^2\leq a<m$, and thus $m$ is principal. Observe that $a=bm$ for some proper $b\in L$. Clearly, $m^2\leq a=bm\leq m^2$. We conclude that $a=m^2$.

\medskip
(3) $\Rightarrow$ (4) Let $x\in L$ be principal such that $m^2\leq x$. Without restriction, let $x$ be proper. By Proposition~\ref{Proposition 2.7}, we get that $x$ is an OA-element of $L$. By the assumption, $x=m^2$. Therefore, there are at most two elements between $m^2$ and $m$.

\medskip
(4) $\Rightarrow$ (1) This is clear.
\end{proof}

Note that if $(L,m)$ is a quasi-local principally generated C-lattice such that $m$ is principal or $m=m^2$, then $m^2$ must be comparable. (The statement is clearly true if $m=m^2$. If $m$ is principal and $x\in L$ is such that $m^2\nleq x$, then $(x:m)\leq m$, and hence $x=m(x:m)\leq m^2$.)

\begin{proposition}\label{Proposition 2.14}
Let $L$ be a C-lattice and set $m={\rm J}(L)$. The following statements are equivalent.
\begin{enumerate}
\item[(1)] Every TA-element of $L$ is an OA-element of $L$.
\item[(2)] $L$ satisfies the following two conditions.
\begin{enumerate}
\item[(a)] ${\rm Spec}(L)$ is linearly ordered.
\item[(b)] If $x$ is a TA-element of $L$, then $\min(x)\subseteq\{x,m\}$.
\end{enumerate}
\item[(3)] $L$ is quasi-local and $\sqrt{x}=m$ for each nonprime TA-element $x\in L$.
\end{enumerate}
\end{proposition}

\begin{proof}
(1) $\Rightarrow$ (2) (a) Let $p$ and $q$ be prime elements of $L$. Then $p\wedge q$ is a TA-element of $L$. By assumption, $p\wedge q$ is an OA-element. Then by Corollary~\ref{Corollary 2.8}(1), we have that $\sqrt{p\wedge q}=p\wedge q$ is prime, and hence $p\wedge q=p$ or $p\wedge q=q$ by \cite[Lemma 7]{JaTeYe14}. We obtain that $p\leq q$ or $q\leq p$. Therefore, $L$ is quasi-local with maximal element $m$.

(b) Let $x\in L$ be a TA-element and let $p\in\min(x)$. If $x$ is prime, then it is clear that $x=p$. Now let $x$ be not a prime element. We infer by Proposition~\ref{Proposition 2.7} that $m^2\leq x\leq p\leq m$, and thus $p=m$.

\medskip
(2) $\Rightarrow$ (3) Suppose that $L$ satisfies (a) and (b). It is an immediate consequence of (a) that $L$ is quasi-local with maximal element $m$. Let $x\in L$ be a nonprime TA-element. Since $x$ is proper, there is some $p\in\min(x)$. Clearly, $x\not=p$, and thus $p=m$. Since $L$ is quasi-local, we have that $\min(x)=\{m\}$. This implies that $\sqrt{x}=m$.

\medskip
(3) $\Rightarrow$ (1) Let $x\in L$ be a TA-element. If $x$ is prime, then clearly $x$ is an OA-element of $L$. Now let $x$ be not prime. Then $\sqrt{x}=m$ and $m^2\leq x$ by \cite[Lemma 2]{JaTeYe14}. We infer by Proposition~\ref{Proposition 2.7} that $x$ is an OA-element of $L$.
\end{proof}

\begin{remark}\label{Remark 2.15}
Let $L$ be a C-lattice and let $r,x\in L$ be such that $r\leq x$.
\begin{enumerate}
\item[(1)] If $x$ is an OA-element of $L$, then $x$ is an OA-element of $L/r$.
\item[(2)] If $x$ is a TA-element of $L$, then $x$ is a TA-element of $L/r$.
\end{enumerate}
\end{remark}

\begin{proof}
(1) Let $x$ be an OA-element. Clearly, $x$ is a proper element of $L/r$. Let $a,b,c\in L/r$ be proper elements such that $a\circ b\circ c\leq x$. Then $abc\leq x$. By assumption, $ab\leq x$ or $c\leq x$, and hence $a\circ b\leq x$ or $c\leq x$. Consequently, $x$ is an OA-element of $L/r$.

(2) Let $x$ be a TA-element. Observe that $x$ is a proper element of $L/r$. Let $a,b,c\in L/r$ be such that $a\circ b\circ c\leq x$. We infer that $abc\leq x$, and thus $ab\leq x$ or $ac\leq x$ or $bc\leq x$. This implies that $a\circ b\leq x$ or $a\circ c\leq x$ or $b\circ c\leq x$. Therefore, $x$ is a TA-element of $L/r$.
\end{proof}

\begin{remark}\label{Remark 2.16}
Let $L$ be a C-lattice, let $S\subseteq L_*$ be multiplicatively closed and let $x\in L$ be such that $x_S\not=1$.
\begin{enumerate}
\item[(1)] If $x$ is an OA-element of $L$, then $x_S$ is an OA-element of $L_S$.
\item[(2)] If $x$ is a TA-element of $L$, then $x_S$ is a TA-element of $L_S$.
\end{enumerate}
\end{remark}

\begin{proof}
In what follows, we use without further mention that $(L_S)_*=\{a_S\mid a\in L_*\}$.

(1) Let $x$ be an OA-element. We apply Lemma~\ref{Lemma 2.4}(1). Let $a,b,c\in L_*$ be such that $a_S,b_S,c_S\not=1$ and $a_S\circ_S b_S\circ_S c_S\leq x_S$. Then $abc\leq x_S$, and hence $dabc\leq x$ for some $d\in S$. Since $a,b,c\not=1$, we have that $dab\leq x$ or $c\leq x$. Note that $d_S=1$, and thus $a_S\circ_S b_S\leq x_S$ or $c_S\leq x_S$. Therefore, $x_S$ is an OA-element of $L_S$.

(2) Let $x$ be a TA-element. We use Lemma~\ref{Lemma 2.4}(2). Let $a,b,c\in L_*$ be such that $a_S\circ_S b_S\circ_S c_S\leq x_S$. Then $abc\leq x_S$, and hence $dabc\leq x$ for some $d\in S$. Observe that $dab\leq x$ or $dac\leq x$ or $bc\leq x$. Since $d_S=1$, we have that $a_S\circ_S b_S\leq x_S$ or $a_S\circ_S c_S\leq x_S$ or $b_S\circ_S c_S\leq x_S$. Consequently, $x_S$ is a TA-element of $L_S$.
\end{proof}

Alternatively, we can use Propositions~\ref{Proposition 2.6} and ~\ref{Proposition 2.7} to prove Remark~\ref{Remark 2.16}(1).

\begin{theorem}\label{Theorem 2.17}
Let $L$ be a join-principally generated C-lattice. Every nonzero proper element of $L$ is an OA-element if and only if $L\cong L_1\times L_2$ where $L_1,L_2$ are fields or $L$ is quasi-local with maximal element $m$ such that $m=\sqrt{0}$ and $m^2\leq x$ for every nonzero proper join principal element $x\in L$. In particular, if these equivalent conditions are satisfied, then $\dim(L)=0$.
\end{theorem}

\begin{proof}
($\Rightarrow$) Let every nonzero proper element of $L$ be an OA-element. First let $L$ be quasi-local with maximal element $m$. By assumption, every nonzero proper element of $L$ is a TA-element. Now \cite[Theorem 8]{JaTeYe14} completes the proof. Now let $L$ be not quasi-local. Then the concepts of prime elements and OA-elements coincide. Let $m_1$ and $m_2$ be two distinct maximal elements of $L$. Assume that $m_1\wedge m_2\neq 0$. By the assumption, $m_1\wedge m_2$ is prime. It can be shown that $m_1=m_2$, a contradiction. It follows that $m_1\wedge m_2=0$, and thus $L\cong L/m_1\times L/m_2$. Note that $L/m_1, L/m_2$ are fields.

\medskip
($\Leftarrow$) If $L\cong L_1\times L_2$, where $L_1$ and $L_2$ are fields, then each nonzero proper element of $L$ is prime, and hence it is an OA-element. Now let $L$ be quasi-local with maximal element $m$ such that $m=\sqrt{0}$ and $m^2\leq x$ for every nonzero proper join principal element $x\in L$. Let $y$ be a nonzero proper element of $L$. There is some nonzero join principal element $c\in L$ with $c\leq y$. We have that $m^2\leq c\leq y$, and thus $y$ is an OA-element of $L$ by Proposition~\ref{Proposition 2.7}. It is clear that $\dim(L)=0$ in any case.
\end{proof}

Let $K$ be a field and let $L$ be the lattice of ideals of $K\times K$. Then $L$ is a join-principally generated C-lattice and each nonzero proper element of $L$ is an OA-element, but $L$ is not quasi-local. The examples and facts in the next paragraph were kindly provided to us by the referee.

\medskip
If $L$ is the lattice of ideals of $(\mathbb{Z}/2\mathbb{Z})[X,Y]/(X^2,Y^2)$, then $L$ is a quasi-local principally generated C-lattice for which every proper (principal) element is an OA-element. Moreover, if $L$ is the lattice of ideals of $\mathbb{Z}/8\mathbb{Z}$, then $L$ is a quasi-local principally generated C-lattice for which every nonzero proper (principal) element is an OA-element and yet $0$ is not an OA-element of $L$. Let $(L,m)$ be a quasi-local principally generated C-lattice such that $m=\sqrt{0}$ and $m^2\leq x$ for every nonzero proper principal element of $x\in L$. If $m^2=0$, then every proper (principal) element of $L$ is an OA-element by Proposition~\ref{Proposition 2.7}. Now let $m^2\not=0$. Clearly, there is a nonzero principal element $x\in L$ such that $x\leq m^2$, and hence $m^2=x$ is principal. Since $x$ is nonzero and principal and $L$ is quasi-local we have that $m^3\not=m^2$. (For if $m^3=m^2$, then $x^2=x$, which contradicts \cite[Theorem 1.4]{An76}.) In particular, we have that $m^3=0$ (since there is no nonzero principal element $y\in L$ with $y\leq m^3$).

\section[TAFLs]{TAFLs and their generalizations}\label{3}

In this section, we study C-lattices whose elements have a TA-factorization. A TA-factorization of an element $x\in L$ means that $x$ is written as a finite product of TA-elements $(x_k)_{k=1}^n$. (Note that the element $1$ is the empty product.) We say that a C-lattice $L$ is a {\it TA-factorization lattice} (abbreviated as {\rm TAFL}) if every element of $L$ has a TA-factorization.

\medskip
Next, we study C-lattices whose compact elements have a factorization into TA-elements. We call them {\it CTAFLs}. We also explore the C-lattices whose principal elements have a factorization into TA-elements, called {\it PTAFLs}. Clearly, every TAFL is a CTAFL and every CTAFL is a PTAFL.
	
\begin{example}\label{Example 3.1}
As a simple example, it is clear that each prime element is a TA-element. In particular, every ZPI-lattice is a TAFL. By \cite[Example 2.1]{MuAhDu18}, we have that the lattice of ideals of $\mathbb{Z}[\sqrt{-7}]$ is not a TAFL.
\end{example}

Next, we summarize some basic results related to TAFLs in the following remark.

\begin{remark}\label{Remark 3.2}
Let $L$ be a TAFL, let $L_1$ and $L_2$ be C-lattices, let $r\in L$ and let $S\subseteq L_*$ be multiplicatively closed.
\begin{enumerate}
\item[(1)] $\min(x)$ is finite for each $x\in L$.
\item[(2)] $L_1\times L_2$ is a TAFL if and only if $L_1$ and $L_2$ are both TAFLs.
\item[(3)] $L/r$ is a TAFL.
\item[(4)] $L_S$ is a TAFL.
\end{enumerate}
\end{remark}

\begin{proof}
(1) Let $x=\prod_{k=1}^n x_k$ be a TA-factorization of $x$. By \cite[Theorem 3]{JaTeYe14}, we have that $\min(x_i)$ is finite. Then $\min(x)$ is finite, since $\min(x)\subseteq\bigcup_{i=1}^n\min(x_i)$.

\medskip
(2) We infer by \cite[Theorem 2.20]{CaTeYe15} that $(p_1,p_2)$ is a TA-element of $L$ if and only if one of the following conditions is satisfied.

\begin{enumerate}
\item[(a)] $p_1=1_{L_1}$ and $p_2$ is a TA-element of $L_2$.
\item[(b)] $p_2=1_{L_2}$ and $p_1$ is a TA-element of $L_1$.
\item[(c)] $p_1$ and $p_2$ are prime elements of $L_1$ and $L_2$, respectively.
\end{enumerate}

The rest now follows easily.

\medskip
(3) Let $y\in L/r$. Then $y\in L$. By the assumption, $y=\prod_{i=1}^n x_i$ where $x_j$ is a TA-element of $L$ for each $j\in [1,n]$. Note that $x_j$ is a TA-element of $L/r$ for each $j\in [1,n]$ by Remark~\ref{Remark 2.15} and $y=(\prod_{k=1}^n x_i)\vee r=\bigcirc_{i=1}^n x_i$. Consequently, $L/r$ is a TAFL.

\medskip
(4) Recall that if $x$ is a TA-element of $L$, then $x_S=1$ or $x_S$ is a TA-element of $L_S$ by Remark~\ref{Remark 2.16}(2). Let $a\in L_S$. Then we have that $a=y_S$ for some $y\in L$. By assumption, $y$ has a TA-factorization in $L$, meaning $y$ is the finite product of some TA-elements $(y_k)_{k=1}^n$. We can assume without restriction that $(y_k)_S\not=1$ for each $k\in [1,n]$. We have that $a=y_S=(\prod_{k=1}^n y_k)_S=\bigcirc_{k=1}^n (y_k)_S$. This completes the proof.
\end{proof}

\begin{lemma}\label{Lemma 3.3}
Let $L$ be a C-lattice and let $x\in L$ be such that $\sqrt{x}\in\max(L)$. If $x$ has a TA-factorization, then $x\leq (\sqrt{x})^2$ or $(\sqrt{x})^2\leq x$.
\end{lemma}

\begin{proof} Let $x$ have a TA-factorization, let $(\sqrt{x})^2\nleq x$ and set $m=\sqrt{x}$. By \cite[Theorem 3]{JaTeYe14}, $x$ is not a TA-element. By assumption, $x=\prod_{i=1}^n x_i$ where $x_i$ is a TA-element of $L$ for each $i\in [1,n]$ and $n\geq 2$. Since $x\leq x_i$ for each $i\in [1,n]$ and $\sqrt{x}=m\in\max(L)$, we have that $\sqrt{x_i}=m$ for each $i\in [1,n]$. Consequently, $x\leq m^2$.
\end{proof}

\begin{proposition}\label{Proposition 3.4}
Let $L$ be a principally generated TAFL. Then $\dim(L)\leq 1$.
\end{proposition}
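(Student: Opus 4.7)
I would proceed by contradiction, assuming $\dim(L)\ge 2$, and pick a chain of primes $p_0<p_1<p_2$ of $L$. By Remark~\ref{Remark 3.2}(3) and (4), both localization at a prime and quotienting by a prime preserve the TAFL condition, and they also preserve principal generation. So replacing $L$ by $L_{p_2}/(p_0)_{p_2}$ I reduce to the case where $L$ itself is a principally generated quasi-local TAFL lattice domain with maximal element $m$ and a nonzero prime element $p<m$. A Zorn's Lemma argument in the poset of primes strictly below $m$ (chains have upper bounds because in a C-lattice the join of a chain of primes is again prime, and verifying that such a sup cannot equal $m$ in a TAFL uses the finiteness of $\min(x)$ from Remark~\ref{Remark 3.2}(1)) lets me further assume $p$ is maximal among primes $<m$, so that every prime strictly above $p$ equals $m$.

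Since $L$ is principally generated and $p<m$, pick a principal element $b\le m$ with $b\nleq p$. For every $k\ge 1$, the element $y_k:=p\vee b^k$ strictly exceeds $p$; hence its radical $\sqrt{y_k}$ is a prime strictly above $p$, which forces $\sqrt{y_k}=m$ by the maximality of $p$. Lemma~\ref{Lemma 3.3} then yields $y_k\le m^2$ or $m^2\le y_k$. Observe also that since $p$ is prime and $p<m$, we have $m^2\nleq p$.

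Now distinguish two cases. If $p\nleq m^2$, then for every principal $c\le m$ with $c\nleq p$ the option $p\vee c\le m^2$ fails (it would force $p\le m^2$), so $m^2\le p\vee c$. Passing to $L/p$ this yields $\overline{m}^2\le\overline{c}$ for every nonzero principal $\overline{c}\le\overline{m}$ of the principally generated quasi-local lattice domain $L/p$. Because $m^2\nleq p$, $\overline{m}^2\ne 0$, so there exists a nonzero principal $\overline{c}_0\le\overline{m}^2$; by the universal inequality $\overline{c}_0=\overline{m}^2$. Applying the same inequality to the nonzero principal $\overline{c}_0^{\,2}$ gives $\overline{c}_0=\overline{m}^2\le\overline{c}_0^{\,2}\le\overline{c}_0$, so $\overline{c}_0^{\,2}=\overline{c}_0$. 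But then, since $\overline{c}_0$ is join-principal with $\overline{c}_0^{\,2}=\overline{c}_0$, the identity $(\overline{c}_0^{\,2}:\overline{c}_0)=(0:\overline{c}_0)\vee\overline{c}_0$ evaluates to $1$ on the left and to $\overline{c}_0$ on the right (as $(0:\overline{c}_0)=0$ in the lattice domain $L/p$), contradicting $\overline{c}_0\le\overline{m}<1$.

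The main obstacle is the complementary case $p\le m^2$, where Lemma~\ref{Lemma 3.3} applied to $y=p\vee c$ becomes much less informative (the inequality $p\vee c\le m^2$ is no longer ruled out automatically). Here one must combine the dichotomy across multiple choices of principal elements with the principal generation of $L$ to force $m=m^2$, and then derive a contradiction by applying a Nakayama-style argument for compact (principal) elements in a quasi-local C-lattice, paralleling the idempotent argument used in the first case. Orchestrating this compactness-based case analysis is the subtlest technical step of the proof.
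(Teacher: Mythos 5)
Your proposal has two genuine gaps, and the second one is fatal. First, the Zorn's Lemma step is unsupported: to run Zorn in the poset of primes strictly below $m$ you must show that the join of a chain of such primes (which is indeed prime in a C-lattice) is still strictly below $m$, and your appeal to the finiteness of $\min(x)$ from Remark~\ref{Remark 3.2}(1) does not accomplish this --- that remark bounds the set of minimal primes \emph{over a fixed element} and says nothing about suprema of ascending chains of primes, which in a quasi-local C-lattice can perfectly well have a non-compact $m$ as their join. The paper avoids this issue entirely: instead of maximizing $p$, it chooses $q\in\min(p\vee y)$ for a suitable principal $y\nleq p$ and localizes at $q$ (localization preserving ``principally generated TAFL''), so that the maximal element becomes minimal over $p\vee y$ and $\sqrt{p\vee y}=m$ holds by construction, with no transfinite argument needed.

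Second, and decisively, the case $p\leq m^2$, which you defer with ``orchestrating this compactness-based case analysis is the subtlest technical step,'' is not a loose end --- it is the entire theorem. Claim~2 inside the paper's proof shows that \emph{every} prime $q<m$ satisfies $q\leq m^2$, so your Case 1 ($p\nleq m^2$) never genuinely occurs: your (correct) idempotent/join-principal contradiction there in effect re-proves a special case of Claim~2, and all the remaining content lies in the case you leave open. Your sketch for that case also points the wrong way: you propose to ``force $m=m^2$,'' whereas the paper proves $m\neq m^2$ (Claim~1, via $p\vee y=m=p\vee y^2$ and the join-principal identity $((p\vee y^2):y)=(p:y)\vee y$, yielding $p\vee y=1$, a contradiction) precisely in order to produce a principal $c\leq m$ with $c\nleq m^2$. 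Claim~2 then gives $\sqrt{c}=m$, Lemma~\ref{Lemma 3.3} gives $m^2\leq c$, and for $0\neq s\leq p$ with TA-factorization $s=\prod_{i=1}^k y_i$ one extracts some $y_j\leq p\leq m^2\leq c$, uses weak meet principality of $c$ twice together with the TA-property of $y_j$ (from $y_j=c^2t$ one gets $ct\leq y_j$, since $c^2\leq y_j$ would force $c\leq p$) to conclude $y_j=cy_j$, hence $s=sc$, hence $s=sm$, and finally $s=0$ by \cite[Theorem 1.4]{An76}, so $p=0$. None of this descent argument appears in your proposal, so as it stands the proof is incomplete at its core.
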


\begin{proof}
Observe that since $\dim(L)=\sup\{\dim(L_q)\mid q\in L$ is a prime element$\}$, we can assume (by Remark~\ref{Remark 3.2}(4)) without restriction that $L$ is quasi-local with maximal element $m\neq 0$. It remains to show that each nonmaximal prime element of $L$ is a minimal prime element. Let $p\in L$ be a nonmaximal prime element. Since $L$ is principally generated, there is some principal element $y\in L$ such that $y\leq m$ and $y\nleq p$.

\medskip
Since $L$ is a C-lattice, there is some $q\in\min (p\vee y)$ such that $q\leq m$. Note that $p_q$ is a prime element of $L_q$, $y_q$ is a principal element of $L_q$ and $q_q\in\min((p\vee y)_q)$. If $p_q$ is a minimal prime element of $L_q$, then $p$ is a minimal prime element of $L$. For these reasons, we can assume without restriction that $m\in\min(p\vee y)$. Since $L$ is quasi-local, this implies that $\sqrt{p\vee y}=m$. Next we verify the following claims.

\medskip
Claim 1: $m\neq m^2$.

\medskip
Claim 2: $q\leq m^2$ for every prime element $q<m$.

\medskip
Assume the contrary of claim 1 that $m=m^2$. Clearly, $p\vee y=\prod_{i=1}^k x_i$ where $k$ is a positive integer and $x_i$ is a TA-element of $L$ for each $i\in [1,k]$. Note that $m^2\leq x_i$ for each $i\in [1,k]$ by \cite[Lemma 2]{JaTeYe14}. Therefore, we get that $m^{2k}=m\leq p\vee y\leq m^k=m$ by \cite[Theorem 3]{JaTeYe14} and Lemma~\ref{Lemma 3.3}. Similarly, we have that $m^{2k}=m\leq p\vee y^2\leq m^k=m$. This implies that $p\vee y=m=p\vee y^2$. Observe that $((p\vee y^2):y)=(p:y)\vee y=p\vee y$. Then $1=((p\vee y):y)=((p\vee y^2):y)=p\vee y$, and hence $1=p\vee y=m$, a contradiction.

\medskip
To show that the second claim is true, assume that there is a prime element $q<m$ with $q\nleq m^2$. Clearly, there is a principal element $b\in L$ with $b\leq m$ and $b\nleq q$. Note that $b^n\nleq q$ for each $n\in\mathbb{N}$. Since $L$ is a TAFL and $q\nleq m^2$, we have that $q\vee b^3$ is a TA-element of $L$. It follows that $b^2\leq q\vee b^3$, since $b^3\leq q\vee b^3$. Note that $1=((q\vee b^3):b^2)=(q:b^2)\vee b$. We conclude that $(q:b^2)=1$ or $b=1$. Therefore, $b^2\leq q$ or $b=1$, a contradiction. We infer that every prime element $q\in L$ with $q<m$ satisfies $q\leq m^2$.

\medskip
We will return to the proof of the main part. Since $m\neq m^2$, there is a proper principal element $c\in L$ with $c\nleq m^2$. By claim 2, it follows that $\sqrt{c}=m$. By Lemma~\ref{Lemma 3.3}, we have that $m^2\leq c$. Let $s\in L$ be compact such that $s\leq p$. It follows that $s=\prod_{i=1}^n y_i$ where $y_j$ is a TA-element of $L$ for each $j\in [1,n]$. Since $p$ is prime, there is some $j\in [1,n]$ such that $y_j\leq p$. Since $y_j\leq p<m^2\leq c$ and $c$ is weak meet principal, there is some $\ell\in L$ such that $y_j=c\ell$. Note that $\ell\leq p$ because $c\nleq p$. Consequently, $\ell\leq p\leq m^2\leq c$, and thus $\ell=ct$ for some $t\in L$. Therefore, $y_j=c^2t$, and hence $\ell=ct\leq y_j$, since $y_j$ is a TA-element and $c^2\nleq y_j$. We obtain that $y_j=\ell$, and so $y_j=cy_j$. Therefore, $s=sc$. We have that $s=sc\leq sm\leq s$, and hence $s=sm$. We conclude that $s=0$ by \cite[Theorem 1.4]{An76}, a contradiction. This implies that $p=0$. 
\end{proof}

\begin{theorem}\label{Theorem 3.5}
If $L$ is a TAFL and a Pr\"ufer lattice domain, then $L$ is a ZPI-lattice domain.
\end{theorem}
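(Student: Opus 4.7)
The plan is to reduce the statement to showing that every $2$-absorbing element of $L$ is a finite product of prime elements, and then to dispatch the two structural cases of TA-elements from \cite[Theorem 3]{JaTeYe14}.

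Since $L$ is Pr\"ufer, every compact element of $L$ is principal, so the C-lattice $L$ is principally generated, and Proposition~\ref{Proposition 3.4} yields $\dim(L)\leq 1$. Combined with $L$ being a lattice domain, the nonzero prime elements of $L$ are precisely the maximal elements. Because $L$ is a TAFL, every element of $L$ has a TA-factorisation, and since $0$ is itself prime, it suffices to show that every proper nonzero TA-element $y$ of $L$ is a finite product of primes. By \cite[Theorem 3]{JaTeYe14}, either (a) $\sqrt{y}=p$ is prime with $p^{2}\leq y\leq p$, or (b) $\sqrt{y}=p_{1}\wedge p_{2}$ for distinct primes $p_{1},p_{2}$ with $p_{1}p_{2}\leq y\leq p_{1}\wedge p_{2}$. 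Since $y\neq 0$, the primes involved must be maximal. In case (b), $p_{1}\vee p_{2}=1$, and expanding $(p_{1}\wedge p_{2})(p_{1}\vee p_{2})=(p_{1}\wedge p_{2})p_{1}\vee(p_{1}\wedge p_{2})p_{2}\leq p_{1}p_{2}$ gives $p_{1}\wedge p_{2}=p_{1}p_{2}$, whence $y=p_{1}p_{2}$ is already a product of primes.

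The main work lies in case (a), where the goal is to prove $y=p$ or $y=p^{2}$. The plan is to localise at $p$: by Remark~\ref{Remark 3.2}(4), $L_{p}$ is a quasi-local Pr\"ufer TAFL lattice domain with maximal element $m=p_{p}$, and $y_{p}\in[m^{2},m]$. Since $L_{p}$ is a $1$-dimensional lattice domain, every nonzero TA-element $z$ of $L_{p}$ must have $\sqrt{z}=m$ and hence $m^{2}\leq z\leq m$, so any nonzero principal element $c$ of $L_{p}$, being the product of $k$ such TA-factors, satisfies $m^{2k}\leq c\leq m^{k}$ for some $k\in\mathbb{N}$. This pinching condition on the compact elements is strong enough to force $m$ itself to be compact, and hence principal by Pr\"ufer, and then to collapse the interval $[m^{2},m]$ to the two-element chain $\{m^{2},m\}$. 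Once $y_{p}\in\{m,m^{2}\}$ is established, the local-global principle $a=b\Leftrightarrow a_{n}=b_{n}$ at all maximal $n$, combined with $y_{q}=1$ for maximals $q\neq p$ (by $p$-primariness of $y$), upgrades this to $y\in\{p,p^{2}\}$.

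The principal obstacle is the pinching argument inside the localisation: ruling out the existence of a principal element strictly between $m^{2}$ and $m$, and deducing compactness of $m$. I expect Lemma~\ref{Lemma 3.3} applied inside $L_{p}$, together with the principality (hence invertibility in a suitable sense) of the compact elements and the fact that products of TA-factors cannot produce values in the gap, to be the technical core of the argument.
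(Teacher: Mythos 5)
Your global scaffolding is sound: Pr\"ufer implies principally generated (compacts are principal), so Proposition~\ref{Proposition 3.4} gives $\dim(L)\leq 1$; the reduction to TA-elements is legitimate; the comaximal computation in case (b) is correct; and the local--global patching in case (a) works because $p$ is maximal and $p^{2}\leq y$ makes $y$ $p$-primary, so $y_{q}=1$ for maximal $q\neq p$. But the proof stops exactly at its decisive point. The claim that in $L_{p}$ the interval $[m^{2},m]$ collapses to $\{m^{2},m\}$ (equivalently $y_{p}\in\{m,m^{2}\}$) is never established: you write ``I expect Lemma~\ref{Lemma 3.3} \dots to be the technical core,'' which is an acknowledgement of the hole rather than a proof. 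Lemma~\ref{Lemma 3.3} cannot carry this weight --- it only yields comparability of an element with $(\sqrt{x})^{2}$, which is automatic in $L_{p}$ anyway (a quasi-local Pr\"ufer lattice is linearly ordered, as the paper's own proof notes), and it says nothing that excludes elements strictly between $m^{2}$ and $m$; indeed by \cite[Lemma 5]{JaTeYe14} \emph{every} element of $[m^{2},m]$ is a TA-element, so no purely TA-theoretic pinching can rule them out. Two further unsubstantiated steps: your bounds $m^{2k}\leq c\leq m^{k}$ apply only to principal (compact) elements, whereas $y_{p}$ need not be compact, so the pinching does not directly constrain $y_{p}$; and the assertion that the pinching ``forces $m$ to be compact'' is given no argument at all.

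The paper closes precisely this gap with a citation you do not use: \cite[Theorem 10]{JaTeYe14}, which classifies TA-elements of a Pr\"ufer lattice \emph{domain} as primes, squares $p_{1}^{2}$ of primes, or meets $p_{1}\wedge p_{2}$ of two nonzero primes. In the linearly ordered localization this makes every TA-element a product of primes, so each $L_{m}$ is a ZPI-lattice; the paper then globalizes differently from you, via finiteness: $\dim(L)\leq 1$ plus Remark~\ref{Remark 3.2}(1) give finite character, local compactness of all elements then yields global compactness, and Pr\"ufer upgrades this to a principal element lattice domain, hence ZPI. If you want to keep your route and avoid Theorem 10, the collapse can be proved by hand from weak meet principality and the linear order of $L_{p}$: if $m\neq m^{2}$, pick a principal $x\leq m$ with $x\nleq m^{2}$; for any principal $d\leq m$ with $d\nleq x$, comparability gives $x\leq d$, so $x=(x:d)d$ with $(x:d)\leq m$, whence $x\leq m^{2}$, a contradiction --- therefore every principal element below $m$ lies below $x$, so $x=m$ is principal, and then any $z\in[m^{2},m]$ satisfies $z=(z:m)m$, which is $m$ if $(z:m)=1$ and $\leq m^{2}$ otherwise; the case $m=m^{2}$ is killed by your own pinching, which then forces every nonzero principal $c\leq m$ to equal $m$, so $m$ is join principal and $1=(m^{2}:m)=(0:m)\vee m=m$ in the domain $L_{p}$, absurd unless $L_{p}$ is a field. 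As submitted, however, the proposal is an outline with its technical core missing, not a proof.
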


\begin{proof}
Let $L$ be a Pr\"ufer lattice domain such that $L$ is also a TAFL. Recall from \cite[Theorem 10]{JaTeYe14} that the following statements are equivalent: (1) $p$ is a TA-element, (2) $p$ is a prime element of $L$ or $p=p_1^2$ is a $p_1$-primary element of $L$ or $p=p_1\land p_2$ where $p_1$ and $p_2$ are some nonzero prime elements of $L$. It is sufficient to show that every TA-element of $L$ is a finite product of prime elements of $L$. (Then $L$ is clearly a ZPI-lattice domain.) In view of the equivalence before, we only need to show that if $p,q\in L$ are prime elements such that $p\nleq q$ and $q\nleq p$, then $p\wedge q=pq$. Let $p,q\in L$ be prime elements such that $p\nleq q$ and $q\nleq p$. Assume that $p\vee q$ is proper. Then $p\vee q\leq m$ for some maximal element $m\in L$. Since $L$ is a Pr\"ufer lattice domain, we have that $L_m$ is a totally ordered lattice, and hence $p=p_m\leq q_m=q$ or $q=q_m\leq p_m=p$, a contradiction. We infer that $p\vee q=1$. Set $c=p\wedge q$. Then $pq\leq c$ and $c=c(p\vee q)=cp\vee cq$. Since $c\leq q$, we have that $cp\leq pq$. It follows by analogy that $cq\leq pq$. Therefore, $c=cp\vee cq\leq pq$, and thus $p\wedge q=pq$.
\end{proof}

Next we study CTAFLs.

\begin{remark}\label{Remark 3.6}
Let $L$ be a CTAFL, let $L_1$ and $L_2$ be C-lattices, let $r\in L_*$ and let $S\subseteq L_*$ be multiplicatively closed.
\begin{enumerate}
\item[(1)] $\min(x)$ is finite for each $x\in L_*$.
\item[(2)] $L_1\times L_2$ is a CTAFL if and only if $L_1$ and $L_2$ are both CTAFLs.
\item[(3)] $L/r$ is a CTAFL.
\item[(4)] $L_S$ is a CTAFL.
\end{enumerate}
\end{remark}

\begin{proof}
(1) This can be proved along the same lines as in the proof of Remark~\ref{Remark 3.2}(1).

\medskip
(2) Note that $(L_1\times L_2)_*=\{(a,b)\mid a\in (L_1)_*,b\in (L_2)_*\}$. It follows from \cite{JaTeYe14} that $(p_1,p_2)$ is a TA-element of $L$ if and only if $p_1=1_{L_1}$ and $p_2$ is a TA-element of $L_2$ or $p_2=1_{L_2}$ and $p_1$ is a TA-element of $L_1$ or $p_1$ and $p_2$ are prime elements of $L_1$ and $L_2$, respectively. The rest is straightforward.

\medskip
(3) Observe that every element $a\in L$ with $a\geq r$ is compact in $L$ if and only if $a$ is compact in $L/r$. Now, let $y\in L/r$ be compact. Then $y\geq r$. By the assumption, $y=\prod_{k=1}^n x_k$ where $x_i$ is a TA-element of $L$. Note that $x_i$ is a TA-element of $L/r$ by Remark~\ref{Remark 2.15}(2). Then we get that $y=(\prod_{k=1}^n x_k)\vee r=\bigcirc_{k=1}^n x_k$. Therefore, $L/r$ is a CTAFL.

\medskip
(4) Let $z\in (L_S)_*$. Then $z=y_S$ for some $y\in L_*$. The rest can be shown along the same lines as in the proof of Remark~\ref{Remark 3.2}(4).
\end{proof}

\begin{proposition}\label{Proposition 3.7}
Let $L$ be a principally generated CTAFL that satisfies the ascending chain condition on prime elements. Then $\dim(L)\leq 1$.
\end{proposition}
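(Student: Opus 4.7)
The plan is to mimic the proof of Proposition~\ref{Proposition 3.4}, replacing direct TA-factorizations of non-compact elements by TA-factorizations of compact surrogates, and invoking ACC on primes to control the construction of these surrogates. First I would reduce to the quasi-local case: since $\dim(L)=\sup\{\dim(L_q):q\in L\text{ prime}\}$, and since principal generation, CTAFL (Remark~\ref{Remark 3.6}(4)), and ACC on primes are all inherited by $L_q$ (via the bijection between primes of $L_q$ and primes of $L$ below $q$), I may assume $L$ is quasi-local with maximal element $m$. Suppose for contradiction $\dim(L)\geq 2$, so there is a chain of primes $p_0<p<m$. As in Proposition~\ref{Proposition 3.4}, by further localizing at a minimal prime of $p\vee y$ for a suitably chosen principal $y\leq m$ with $y\nleq p$, I may in addition assume $m\in\min(p\vee y)$; in the quasi-local setting this forces $\sqrt{p\vee y}=m$, while preserving a chain $p_0<p$ below $p$ so that $p$ remains non-minimal.

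The main technical obstacle is that $p\vee y$ is typically not compact, so CTAFL does not factor it directly. To work around this, I will work with the directed family of compact surrogates $c\vee y$ for compact $c\leq p$, so that $p\vee y=\bigvee_{c}(c\vee y)$. For each such $c$, a TA-factorization of $c\vee y$ is available and $\min(c\vee y)$ is finite by Remark~\ref{Remark 3.6}(1). Any $q\in\min(c\vee y)$ with $q\neq m$ cannot contain $p$: otherwise $q\geq p\vee y$, and minimality of $m$ over $p\vee y$ in the quasi-local lattice would force $q=m$, a contradiction. Principal generation therefore supplies a compact $c_q\leq p$ with $c_q\nleq q$; replacing $c$ by $c\vee\bigvee_{q}c_q$ excludes these finitely many primes from the next level of minimal primes, and this process can be iterated. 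To argue termination, form the tree whose level-$n$ vertices are $\min(c^{(n)}\vee y)$ with edges $q\leq q'$; the tree is finitely branching, and any infinite branch staying outside $\{m\}$ is a strictly ascending chain of primes (since at successive levels the non-$m$ minimal primes are disjoint, and the prime-avoidance property of primes gives refinement), contradicting ACC on primes. By K\"onig's lemma the non-$m$ subtree is finite, so the iteration terminates in a compact element $z=c^{(N)}\vee y\leq p\vee y$ with $\sqrt{z}=m$.

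With $z$ in hand, the three arguments of Proposition~\ref{Proposition 3.4} adapt with compact surrogates replacing non-compact elements. For the analogue of Claim~1 ($m\neq m^2$): assuming $m=m^2$, factoring $z$ yields TA-factors each of which is $m$-primary with $m^2\leq$ itself by \cite[Theorem 3]{JaTeYe14}, so each equals $m$ and $z=m$; an analogous compact surrogate of $p\vee y^2$ also equals $m$, and then the join-principal identity for $y$ forces $1=m$, a contradiction. For the analogue of Claim~2 ($q\leq m^2$ for every prime $q<m$): a compact surrogate of $q\vee b^3$ (for a principal $b\leq m$ with $b\nleq q$) is built by the same iterative procedure, and Lemma~\ref{Lemma 3.3}(b) is applied as in Proposition~\ref{Proposition 3.4}. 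The concluding step of Proposition~\ref{Proposition 3.4}---producing a principal $c\nleq m^2$ with $\sqrt{c}=m$, factoring a compact $s\leq p$, and invoking the weak meet principal property of $c$ together with Nakayama \cite[Theorem 1.4]{An76} to derive $s=0$---applies verbatim in CTAFL since $c$ and $s$ are already compact. This yields $p=0$, contradicting the non-minimality of $p$ and completing the proof.
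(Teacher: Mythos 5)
Your proof is correct in substance and follows the paper's skeleton exactly (localize to reduce to the quasi-local case with $m\in\min(p\vee y)$; prove $m\neq m^2$ and $q\leq m^2$ for all primes $q<m$; then factor a compact $s\leq p$, use the weak meet principal element $c$ with $m^2\leq c$, and conclude $s=sm=0$ via \cite[Theorem 1.4]{An76}), but it diverges at the one point where ACC on prime elements enters. The paper simply cites \cite[Theorem 2]{Ja03}: under ACC on primes every prime is the radical of a compact element, so $p=\sqrt{d}$ for some compact $d$, and $d\vee y$ is at once a compact element with radical $m$ to which the CTAFL hypothesis applies in Claim 1. You instead manufacture such a compact element by hand: join $y$ with compact elements $c_q\leq p$ chosen to exclude the finitely many non-maximal minimal primes at each stage (legitimate, since any prime over $c\vee y$ containing $p$ lies over $p\vee y$ and hence equals $m$), and you argue termination via a finitely branching tree and K\"onig's lemma, an infinite branch yielding a strictly ascending chain of primes --- the strictness coming from $c_q\leq c^{(n+1)}\leq q'$ while $c_q\nleq q$. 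This argument is sound and self-contained: it in effect reproves the special case of Jayaram's theorem that is needed, at the cost of considerable extra machinery; the paper's citation is shorter and yields the cleaner statement $p=\sqrt{d}$.

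One local slip: for Claim 2 you say the compact surrogate of $q\vee b^3$ ``is built by the same iterative procedure.'' Taken literally this fails: $\sqrt{q\vee b^3}$ need not equal $m$, and your exclusion step breaks for a non-maximal minimal prime $q'$ with $q\leq q'$, since then no compact $c_{q'}\leq q$ with $c_{q'}\nleq q'$ exists. But no such machinery is needed there: since $q\nleq m^2$ and $L$ is a C-lattice, pick a compact $a\leq q$ with $a\nleq m^2$; then $a\vee b^3$ is compact and proper, and any factorization of it into two or more proper TA-elements would lie below $m^2$, so CTAFL forces it to be a single TA-element, after which $b^2\leq a\vee b^3$ and $1=((a\vee b^3):b^2)=(a:b^2)\vee b$ give $b^2\leq a\leq q$, a contradiction --- exactly the paper's Claim 2. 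Note also that Lemma~\ref{Lemma 3.3}(b) is not what yields the TA-property here; it is only needed afterwards to obtain $m^2\leq c$ from $c\nleq m^2$. With this easy repair, which stays entirely within your ``compact surrogate'' strategy, your argument is complete.
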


\begin{proof}
Observe that for each prime element $q\in L$, we have that $L_q$ is a principally generated CTAFL (by Remark~\ref{Remark 3.6}(4)) that satisfies the ascending chain condition on prime elements. Since $\dim(L)=\sup\{\dim(L_q)\mid q\in L$ is a prime element$\}$, we can assume without restriction that $L$ is quasi-local with maximal element $m\neq 0$. It remains to show that each nonmaximal prime element of $L$ is a minimal prime element. Let $p\in L$ be a nonmaximal prime element. Since $L$ is principally generated, there is some principal element $y\in L$ such that $y\leq m$ and $y\nleq p$.

\medskip
Since $L$ is a C-lattice, there exists some $q\in\min(p\vee y)$ such that $q\leq m$. Clearly, $L_q$ is a principally generated CTAFL (by Remark~\ref{Remark 3.6}(4)) with maximal element $q_q$ that satisfies the ascending chain condition on prime elements. Moreover, $p_q$ is a prime element of $L_q$, $y_q$ is a principal element of $L_q$ and $q_q\in\min((p\vee y)_q)$. If $p_q$ is a minimal prime element of $L_q$, then $p$ is a minimal prime element of $L$. For these reasons, we can assume without restriction that $m\in\min(p\vee y)$. Since $L$ is quasi-local, we infer that $\sqrt{p\vee y}=m$. Next we verify the following claims.

\medskip
Claim 1: $m\neq m^2$.

\medskip
Claim 2: $q\leq m^2$ for every prime element $q\in L$ with $q<m$.

\medskip
First we prove claim 1. It follows from Remark~\ref{Remark 3.6}(1) that $\min(x)$ is finite for each compact element $x\in L$. Since $L$ satisfies the ascending chain condition on prime elements, it follows from \cite[Theorem 2]{Ja03} that $p=\sqrt{d}$ for some compact element $d\in L$. Observe that $m=\sqrt{d\vee y}$ and $d\vee y$ is compact. Consequently, $d\vee y=\prod_{i=1}^t x_i$ for some positive integer $t$ and some TA-elements $x_i\in L$. Clearly, $\sqrt{x_i}=m$ for each $i\in [1,t]$, and thus $m^2\leq x_i\leq m$ by \cite[Theorem 3]{JaTeYe14} for each $i\in [1,t]$. Assume to the contrary that $m=m^2$. Then $d\vee y=m$, and hence $p\vee y=m$. We infer that $m=m^2\leq p\vee y^2\leq p\vee y\leq m$, and thus $p\vee y=p\vee y^2$. Since $y$ is (join) principal, we obtain that $1=((p\vee y):y)=((p\vee y^2):y)=p\vee y=m$, a contradiction. This implies that $m\not=m^2$.\qed(Claim 1)

\medskip
Now we prove claim 2. Assume that there is a prime element $q\in L$ such that $q<m$ and $q\nleq m^2$. Since $L$ is principally generated, there is a principal element $b\in L$ such that $b\leq m$ and $b\nleq q$. Note that $b^2\nleq q$. Since $q\nleq m^2$ and $L$ is a C-lattice, there is a compact element $a\in L$ such that $a\leq q$ and $a\nleq m^2$. Since $a$ and $b$ are compact, we have that $a\vee b^3$ is compact. Note that $a\vee b^3$ is a TA-element, since $L$ is a CTAFL and $a\nleq m^2$. Since $b^3\leq a\vee b^3$ and $a\vee b^3$ is a TA-element, we get that $b^2\leq a\vee b^3$. Note that $1=((a\vee b^3):b^2)=(a:b^2)\vee b$. Since $b\leq m$ and $L$ is quasi-local, we have that $(a:b^2)=1$. Consequently, $b^2\leq a\leq q$, a contradiction.\qed(Claim 2)

\medskip
It is sufficient to show that $p=0$. (Then $p$ is a minimal prime element of $L$ and we are done.) Since $m\neq m^2$ by claim 1 and $L$ is principally generated, there is a principal element $c\in L$ such that $c\leq m$ and $c\nleq m^2$. By claim 2, we have that $\sqrt{c}=m$. Furthermore, Lemma~\ref{Lemma 3.3} implies that $m^2\leq c$. Let $s\in L$ be compact such that $s\leq p$. It follows that $s=\prod_{i=1}^k y_i$ for some positive integer $k$ and some TA-elements $y_i\in L$. Obviously, there is some $j\in [1,k]$ such that $y_j\leq p$. Since $y_j\leq p\leq m^2\leq c$ and $c$ is (weak meet) principal, we infer that $y_j=c\ell$ for some $\ell\in L$. Note that $\ell\leq p$, since $c\nleq p$. Consequently, $\ell\leq p\leq m^2\leq c$, and thus $\ell=ct$ for some $t\in L$. This implies that $y_j=c^2t$, and hence $\ell=ct\leq y_j$ (since $y_j$ is a TA-element of $L$ and $c\nleq p$). Therefore, $y_j=\ell$, and thus $y_j=cy_j$. We infer that $s=sc$, and hence $s=sc\leq sm\leq s$. We conclude that $s=sm$. It is an immediate consequence of \cite[Theorem 1.4]{An76} that $s=0$. Finally, we have that $p=0$ (since $L$ is a C-lattice).
\end{proof}

Next we study PTAFLs. We start with a simple observation.

\begin{remark}\label{Remark 3.8}
Let $L$ be a Pr\"ufer lattice. Then $L$ is a CTAFL if and only if $L$ is a PTAFL.
\end{remark}

\begin{proof}
This is obvious, since every compact element in a Pr\"ufer lattice is principal.
\end{proof}

Note that Proposition~\ref{Proposition 3.7} does not hold for PTAFLs. To show that, we consider the following example.

\begin{example}\label{Example 3.9}
Note that if $L$ is the lattice of ideals of a local two-dimensional unique factorization domain $D$ (e.g. take $D=K[X,Y]_{(X,Y)}$ where $K$ is a field and $X$ and $Y$ are indeterminates over $K$), then $L$ is a quasi-local principally generated PTAFL that satisfies the ascending chain condition on prime elements and $\dim(L)=2$.
\end{example}

\begin{theorem}\label{Theorem 3.10}
Let $(L,m)$ be a quasi-local principally generated C-lattice domain such that $m^2$ is comparable and $\bigwedge_{n\in\mathbb{N}} m^n=0$. Then $L$ is a TAFL.
\end{theorem}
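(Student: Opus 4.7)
My plan is to prove this by strong induction on the invariant
\[
v(x)=\max\{n\geq 0 : x\leq m^n\},
\]
which is well-defined and finite for every nonzero $x\in L$, thanks to the hypothesis $\bigwedge_{n\in\mathbb{N}} m^n = 0$. The cases $x=1$ (empty product) and $x=0$ (which is prime, hence a TA-element, since $L$ is a lattice domain) are disposed of immediately, so I restrict to nonzero proper $x$, where $v(x)\geq 1$ because $L$ is quasi-local with maximal element $m$.

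The base case $v(x)=1$ is handled by the comparability hypothesis: since $x\nleq m^2$, comparability of $m^2$ forces $m^2\leq x\leq m$, and such an element is easily seen to be a TA-element. Indeed, if $abc\leq x$, then either some factor equals $1$ (so the product of the remaining two already lies below $x$), or all three factors are proper and hence $\leq m$, whence $ab\leq m\cdot m = m^2\leq x$.

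For the inductive step, fix $x$ with $v(x)=n\geq 2$, so that $x\leq m^2$. The key move is to factor out a fixed principal TA-element. Unless $L$ is a field (in which case TAFL is trivial), the hypotheses force $m\neq m^2$ -- otherwise iteration yields $m=\bigwedge_{n} m^n=0$ -- and principal generation then supplies a principal element $c$ with $c\leq m$ and $c\nleq m^2$. Comparability of $m^2$ gives $m^2\leq c\leq m$, so $c$ is a TA-element by the base-case argument. Because $n\geq 2$ forces $x\leq m^2\leq c$, the meet principality of $c$ yields
\[
x \;=\; x\wedge c \;=\; \bigl((x:c)\wedge 1\bigr)\cdot c \;=\; (x:c)\cdot c.
\]
Setting $y=(x:c)$, I verify that $y$ is nonzero (since $yc=x\neq 0$) and proper (since $c\nleq x$ combined with $(x:c)\cdot c\leq x$ rules out $y=1$). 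From $y\leq m^{v(y)}$ and $c\leq m$ I obtain $x=yc\leq m^{v(y)+1}$, so $v(y)\leq v(x)-1<v(x)$. The inductive hypothesis then gives a TA-factorization $y=\prod_i y_i$, and concatenating with $c$ produces a TA-factorization $x=\bigl(\prod_i y_i\bigr)\cdot c$.

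The main obstacle is choosing the right induction variable and exploiting meet principality to secure the key identity $x=(x:c)\cdot c$; once a principal $c$ with $v(c)=1$ is in hand, the descent in $v$ falls out cleanly. The only delicate bookkeeping I anticipate is confirming the existence of $c$ (which needs $m\neq m^2$, i.e.\ $\bigwedge_n m^n=0$ together with $m\neq 0$) and the strict inequality $v(y)<v(x)$ together with $y$ being both proper and nonzero.
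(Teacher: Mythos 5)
Your proof is correct, and its skeleton is the same as the paper's: both arguments hinge on choosing a principal element $c$ with $c\leq m$ and $c\nleq m^2$ (available because $m\neq m^2$, which both extract from $\bigwedge_{n\in\mathbb{N}}m^n=0$), observing via comparability that $m^2\leq c$, and then dividing $c$ out of any $z\leq m^2$ using weak meet principality, with the largest $n$ satisfying $z\leq m^n$ (your $v(z)$, finite precisely because $\bigwedge_{n\in\mathbb{N}}m^n=0$) controlling termination. The packaging differs in two ways worth noting. First, the paper proves the identity $m^2=cm$, hence $m^n=c^{n-1}m$, and extracts all $n-1$ copies of $c$ at once, writing $z=c^{n-1}a$ and showing $m^2\leq a$ by a contradiction against the maximality of $n$; you instead peel off one copy of $c$ per step and run a strong induction on $v$, checking that $y=(z:c)$ is nonzero, proper, and satisfies $v(y)<v(z)$ --- slightly more bookkeeping per step, but each step is routine, and your verifications (in particular $y\neq 1$ because $c\nleq m^2$ while $z\leq m^2$) are all sound. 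Second, and more interestingly, your base case proves directly that every proper $x$ with $m^2\leq x\leq m$ is a TA-element (the two-line case split on whether a factor equals $1$), whereas the paper invokes \cite[Theorem 3]{JaTeYe14} together with the hypothesis $\dim(L)\leq 1$ at the corresponding points. As a result your argument never uses $\dim(L)\leq 1$ at all, and this is legitimate: by Proposition~\ref{Proposition 2.8}, every proper $x$ with $m^2\leq x<m$ is already an OA-element, hence a TA-element. In fact the dimension hypothesis is dispensable, since it follows from the remaining ones: for a prime $p$ with $0<p<m$, comparability forces $p\leq m^2$ (otherwise $m=\sqrt{m^2}\leq p$), so $p\leq c$, and since $c\nleq p$ we get $(p:c)\leq p$ and $p=(p:c)c\leq pc\leq pm\leq p$, whence $p=pm^n\leq m^n$ for all $n$ and $p=0$. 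So your route is essentially the paper's, made self-contained and, in this one respect, marginally more general.
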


\begin{proof}
Assume that $m=m^2$. Since $L$ is a lattice domain and $\bigwedge_{n\in\mathbb{N}} m^n=0$, we infer that $m=0$. Therefore, $L$ is a field and hence we get the desired properties. Now, suppose that $m\neq m^2$. Then there is a proper principal element $x\in L$ with $x\nleq m^2$. By the assumption, we get that $m^2<x$. Since $x$ is meet principal, we conclude that $m^2=xm$.

Next we show that $\dim(L)\leq 1$. Let $p\in L$ be a prime element such that $p<m$. Clearly, $m^2\nleq p$, and thus $p\leq m^2=xm\leq x$. Observe that $p=xb$ for some $b\in L$, and hence $b\leq p$ (since $x\nleq p$). This implies that $p=xp$, and so $p=x^np\leq m^n$ for each $n\in\mathbb{N}$. Therefore, $p\leq\bigwedge_{n\in\mathbb{N}} m^n=0$, and thus $p=0$. This shows that $\dim(L)\leq 1$.

By \cite[Theorem 3]{JaTeYe14}, we obtain that $x$ is a TA-element. Let $z\in L$ be proper. It is clear that $z=0$ is a TA-element. Now let $z\not=0$. If $m^2\leq z$, then the proof is complete by \cite[Theorem 3]{JaTeYe14}. Now let $z\leq m^2$. Let $n$ be the largest positive integer satisfying $z\leq m^n$. We conclude that $z\leq m^n=x^{n-1}m$. Consequently, $z\leq x^{n-1}$. Note that $z=x^{n-1}a$ for some $a\in L$, since $x^{n-1}$ is principal. If $a\leq m^2$, then we obtain $z=x^{n-1}a\leq x^{n-1}m^2=m^{n+1}$, leading to a contradiction. This implies that $m^2\leq a$, and thus $a$ is a TA-element. Therefore, $z$ has a TA-factorization.
\end{proof}

\begin{theorem}\label{Theorem 3.11}
Let $(L,m)$ be a quasi-local principally generated C-lattice domain. Then $L$ is a TAFL if and only if $\dim(L)\leq 1$ and $L$ is a PTAFL. If these equivalent conditions are satisfied, then $\bigwedge_{n\in\mathbb{N}} m^n=0$ and $m^2$ is comparable.
\end{theorem}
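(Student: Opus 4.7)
The plan for the forward direction is immediate: every TAFL is a PTAFL by the discussion opening Section~\ref{3}, and Proposition~\ref{Proposition 3.4} gives $\dim(L)\leq 1$. So I concentrate on the converse together with the ``moreover'' claim, which I propose to derive by first establishing the two auxiliary properties ``$m^2$ is comparable'' and $\bigwedge_{n\in\mathbb{N}}m^n=0$ from the hypothesis ($L$ a principally generated PTAFL domain with $\dim(L)\leq 1$), and then quoting Theorem~\ref{Theorem 3.10}. If $m=0$ then $L$ is a field and every assertion is trivial, so I may assume $\dim(L)=1$, in which case the primes of $L$ are exactly $0$ and $m$.

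To see that $m^2$ is comparable, I observe that since $L$ is principally generated it suffices to verify comparability for every principal element $x$: indeed an arbitrary $a=\bigvee_i x_i$ with each $x_i$ principal is then either $\leq m^2$ (if every $x_i\leq m^2$) or $\geq m^2$ (if some $x_j\geq m^2$). For a proper nonzero principal $x$, the only primes of $L$ above $x$ are $m$, so $\sqrt{x}=m\in\max(L)$, and Lemma~\ref{Lemma 3.3}(c) forces $x\leq m^2$ or $m^2\leq x$. The edge cases $x=0$ and $x=1$ are trivial.

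To see that $\bigwedge_{n\in\mathbb{N}}m^n=0$, I argue by contradiction. Otherwise the principally generated lattice $L$ contains a nonzero principal element $x$ with $x\leq m^n$ for all $n$. By PTAFL I factor $x=y_1\cdots y_k$ with each $y_i$ a TA-element; dropping factors equal to $1$ is harmless, and since $L$ is a domain no $y_i$ equals $0$. Because the only primes of $L$ are $0$ and $m$ and each $y_i>0$, one has $\min(y_i)=\{m\}$, and the structure theorem for TA-elements \cite[Theorem 3]{JaTeYe14} yields $m^2\leq y_i$. Hence $m^{2k}\leq x\leq m^{2k+1}\leq m x\leq x$, so $x=mx$, and the Nakayama-type result \cite[Theorem 1.4]{An76} (already used in Proposition~\ref{Proposition 3.4}) forces the contradiction $x=0$.

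With these two properties in hand, Theorem~\ref{Theorem 3.10} applies directly and yields that $L$ is a TAFL, completing both the converse and the ``moreover'' assertion. The main difficulty is the second step: one must combine the PTAFL factorization, the domain and dimension hypotheses, and the TA-structure theorem simultaneously in order to land every factor in the $m$-primary regime where the Nakayama collapse becomes available; once that is done, the rest is bookkeeping.
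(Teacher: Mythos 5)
Your proof is correct, and it is organized genuinely differently from the paper's. The paper proves the converse from scratch: assuming $\dim(L)\leq 1$ and PTAFL, it handles $m=m^2$ separately, picks a principal $x\nleq m^2$, argues $x$ is a TA-element with $m^2=xm$, proves $\bigwedge_{n\in\mathbb{N}}m^n=0$ inline (the same factor-through-TA-elements computation as yours, except it concludes from $x=x^2$ and $1=x\vee(0:x)$ rather than from $x=mx$ and the Nakayama-type result \cite[Theorem 1.4]{An76} --- both endings are legitimate, and the paper itself uses your ending elsewhere, e.g.\ in Proposition~\ref{Proposition 3.4}), and then essentially repeats the largest-$n$ descent from the proof of Theorem~\ref{Theorem 3.10} to manufacture TA-factorizations case by case; comparability of $m^2$ is extracted only at the very end, as a consequence of $L$ already being a TAFL. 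You instead establish the two auxiliary hypotheses first --- comparability of $m^2$ directly from the PTAFL hypothesis via Lemma~\ref{Lemma 3.3}(c), using that in a quasi-local domain of dimension $1$ every nonzero proper element has radical $m$, together with a clean reduction to principal elements via principal generation; and $\bigwedge_{n\in\mathbb{N}}m^n=0$ by the factorization argument --- and then simply quote Theorem~\ref{Theorem 3.10}, which has exactly the hypotheses you have assembled and whose proof is independent of Theorem~\ref{Theorem 3.11}, so there is no circularity. What each approach buys: your decomposition eliminates the duplication between the proofs of Theorems~\ref{Theorem 3.10} and~\ref{Theorem 3.11} and makes visible that both ``moreover'' properties already follow from the weaker PTAFL-plus-dimension hypothesis (in the paper's ordering, comparability appears to need the full TAFL conclusion, though it does not); the paper's version is self-contained and in particular re-derives the key relation $m^2=xm$ explicitly, which is what drives the descent. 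All your individual steps check out: TA-elements are proper by definition and nonzero in a domain, so $\min(y_i)=\{m\}$ and \cite[Theorem 3]{JaTeYe14} gives $m^2\leq y_i$; principal elements are compact in a C-lattice, so the Nakayama collapse $x=mx\Rightarrow x=0$ applies; and your argument even covers the degenerate case $m=m^2$ without special pleading, since the contradiction machinery rules out a nonzero meet there as well.
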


\begin{proof}
$(\Rightarrow)$ This follows from Proposition~\ref{Proposition 3.4}.

\medskip
$(\Leftarrow)$ Let $L$ be a PTAFL such that $\dim(L)\leq 1$. First, assume that $m=m^2$. Since $\dim(L)\leq 1$, then $m$ is the only TA-element whose radical is $m$ by \cite[Lemma 2]{JaTeYe14}. We infer that each nonzero proper principal element of $L$ is equal to $m$. This implies that $m$ is the only nonzero proper element of $L$ and we are done.

\medskip
Now let $m\neq m^2$. Then there is a principal element $x\in L$ such that $x\leq m$ and $x\nleq m^2$. Since $x\nleq m^2$, we infer that $x$ is a TA-element (since $x$ cannot be the product of more than one TA-element). From \cite[Theorem 3]{JaTeYe14}, we get that $m^2\leq x$ since $\sqrt{x}=m$. Since $x$ is a (weak meet) principal element, we conclude that $m^2=xm$. Let $z\in L$ be proper. We have to show that $z$ has a TA-factorization. If $z=0$, then we are done, since $L$ is a lattice domain. Therefore, we can assume without restriction that $z\neq 0$.

Next we show that $\bigwedge_{n\in\mathbb{N}} m^n=0$. Assume the contrary that $\bigwedge_{n\in\mathbb{N}} m^n\neq 0$. Note that each nonzero TA-element $v\in L$ satisfies $m^2\leq v$ by \cite[Lemma 2]{JaTeYe14}. Clearly, there is some nonzero principal element $x\in L$ such that $x\leq\bigwedge_{n\in\mathbb{N}} m^n$. We have that $x=\prod_{i=1}^k a_i$ where $a_i$ is a TA-element for each $i\in [1,k]$. We obtain that $(m^2)^k\leq x\leq (m^4)^k\leq (m^2)^k$, and hence $x=x^2$. Since $x$ is principal, we infer that $1=x\vee (0:x)$. Therefore, $x=1$, a contradiction.

By Theorem~\ref{Theorem 3.10}, it remains to show that $m^2$ is comparable. Let $y\in L$ be proper such that $y\nleq m^2$. Since $L$ is principally generated, there is some proper principal element $x\in L$ such that $x\leq y$ and $x\nleq m^2$. Observe that $x$ is a TA-element (since $L$ is a PTAFL). From \cite[Lemma 2]{JaTeYe14}, we get that $m^2\leq x$ since $\dim(L)\leq 1$.
\end{proof}

\begin{proposition}\label{Proposition 3.12}
Let $(L,m)$ be a quasi-local principally generated C-lattice such that $L$ is not a lattice domain. Then $L$ is a TAFL if and only if $\dim(L)=0$ and $L$ is a PTAFL.
\end{proposition}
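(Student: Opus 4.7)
The plan is to deduce ($\Rightarrow$) from Proposition~\ref{Proposition 3.4} and to establish ($\Leftarrow$) by adapting the strategy of Theorem~\ref{Theorem 3.11}, the main novelty being a nilpotency argument that substitutes for the lattice-domain hypothesis used there. For the forward direction, the PTAFL conclusion is immediate from the definition, while for $\dim(L)=0$ I would appeal to Proposition~\ref{Proposition 3.4}: inspecting its proof shows that in a quasi-local principally generated TAFL every nonmaximal prime element in fact equals $0$. Since $L$ is not a lattice domain, $0$ is not prime, so no nonmaximal primes can exist; hence every prime element of $L$ is maximal, giving $\dim(L)=0$.

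For the converse, I first rule out $m=m^2$. If it held, then any proper nonzero TA-element $v$ would satisfy $\sqrt{v}=m$ (since $m$ is the only prime), hence $m^2\leq v\leq m$ by \cite[Theorem 3]{JaTeYe14}, forcing $v=m$; the PTAFL property together with principal generation would then force $L=\{0,m,1\}$, and a direct check of the multiplication makes $0$ a prime element of $L$, contradicting the hypothesis. So $m\neq m^2$, and principal generation yields a principal $x\leq m$ with $x\nleq m^2$. Factoring $x$ into TA-elements and using that $m$ is the unique prime, each proper nonzero factor must satisfy $m^2\leq x_i\leq m$; since $x\nleq m^2$, the factorization has length one, so $x$ itself is a TA-element with $m^2\leq x\leq m$. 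Weak-meet-principality of $x$ combined with quasi-locality then gives $(m^2:x)=m$, so $m^2=mx$, and by induction $m^k=x^{k-1}m$ for every $k\geq 1$.

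The key new step is to show $m$ is nilpotent. Since $m$ is the only prime, $\sqrt{0}=m$, and the C-lattice identity $\sqrt{0}=\bigvee\{y\in L_{*}\mid y^n=0\text{ for some }n\in\mathbb{N}\}$ together with compactness of $x$ gives $x\leq y_1\vee\cdots\vee y_r$ with each $y_i^{n_i}=0$; a standard expansion of $(y_1\vee\cdots\vee y_r)^N$ as a join of monomials shows this join is nilpotent, so $x^N=0$ for some $N$, and consequently $m^{N+1}=x^Nm=0$. To finish, let $z\in L$ be proper. If $z\nleq m^2$, pick a principal $y\leq z$ with $y\nleq m^2$; then $y$ is TA, so $m^2\leq y\leq z\leq m$, whence $z$ is TA by \cite[Theorem 3]{JaTeYe14}. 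If $z=0$, then $z=m^{N+1}$ is a product of primes and hence of TA-elements. Otherwise $0\neq z\leq m^2$; take the largest $n\geq 2$ with $z\leq m^n$ (well-defined because $m^{N+1}=0$), use $z\leq m^n=x^{n-1}m\leq x^{n-1}$ and weak-meet-principality of $x^{n-1}$ to write $z=x^{n-1}a$ with $a=(z:x^{n-1})$, and show $a\nleq m^2$ by contradicting the maximality of $n$ via $xm=m^2$; then $a$ is itself a TA-element and $z=x^{n-1}a$ is the desired TA-factorization. The two main obstacles I anticipate are excluding $m=m^2$ using the non-lattice-domain hypothesis and establishing nilpotency of $m$, which replaces the role played by $\bigwedge_{n\in\mathbb{N}}m^n=0$ in Theorem~\ref{Theorem 3.11}.
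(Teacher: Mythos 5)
Your proof is correct and follows essentially the same route as the paper: ($\Rightarrow$) via the proof of Proposition~\ref{Proposition 3.4}, and ($\Leftarrow$) by the same scheme as Theorem~\ref{Theorem 3.11} (a principal TA-element $x\nleq m^2$ with $m^2=xm$, the largest $n$ with $z\leq m^n$, and the residual $a=(z:x^{n-1})$ shown to satisfy $m^2\leq a$). The only local variations are your two ``new'' steps: the paper gets nilpotency of $m$ in one line because $0$ is itself a principal element, so the PTAFL hypothesis gives $0=\prod_{i=1}^k x_i$ with $\sqrt{x_i}=m$ and hence $m^{2k}=0$ (your argument via $\sqrt{0}=m$ and compactness of $x$ is a valid but longer substitute), and it then dismisses $m=m^2$ simply by noting nilpotency forces $m=0$, where you instead derive a contradiction with the non-domain hypothesis --- both variants work.
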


\begin{proof}
$(\Rightarrow)$ Let $L$ be a TAFL. By the proof of Proposition~\ref{Proposition 3.4}, we know that if $p\in L$ is a nonmaximal prime element of $L$, then $p=0$. Therefore, $\dim(L)\leq 1$. Since $L$ is not a lattice domain, we infer that $\dim(L)=0$.

\medskip
$(\Leftarrow)$ Let $\dim(L)=0$ and let $L$ be a PTAFL. Then $0=\prod_{i=1}^k x_i$ where $k\in\mathbb{N}$ and $x_j\in L$ are TA-elements with $\sqrt{x_j}=m$ for each $j\in [1,k]$. By \cite[Lemma 2]{JaTeYe14}, we have that $m^2\leq x_i$ for each $i\in [1,k]$, and thus $(m^2)^k\leq 0$. This implies that $0=m^{2k}$. If $m=m^2$, then $m=0$, a contradiction (since $L$ is not a domain). Consequently, $m\neq m^2$, and hence there is a principal element $z\in L$ such that $z\leq m$ and $z\nleq m^2$. Since $z\nleq m^2$, we infer that $z$ is a TA-element (since $z$ cannot be the product of more than one TA-element). From \cite[Lemma 2]{JaTeYe14}, we get that $m^2\leq z$ (since $\sqrt{z}=m$). Since $z$ is a (weak meet) principal element, we conclude that $m^2=zm$.

Next we show that $m^2$ is comparable. Let $y\in L$ be proper such that $y\nleq m^2$. Then there is some proper principal element $v\in L$ such that $v\leq y$ and $v\nleq m^2$. Since $L$ is a PTAFL, we have that $v$ is a TA-element, and hence $m^2=(\sqrt{v})^2\leq v$ by \cite[Lemma 2]{JaTeYe14}.

Finally, let $x\in L$ be nonzero. If $m^2\leq x$, then $x$ is a TA-element because it is a primary element as shown by \cite[Lemma 5]{JaTeYe14}. Now let $x\leq m^2$. Then $x\leq z$. Note that $x\nleq 0=z^{2k}$. We infer that $x\leq z^r$ and $x\nleq z^{r+1}$ for some $r\in\mathbb{N}$. Consequently, $x=z^ra$ for some $a\in L$ with $a\nleq z$. It follows that $a\nleq m^2$, and thus $m^2\leq a$. Observe that $a$ is a TA-element by Proposition~\ref{Proposition 2.7}. Therefore, $x$ is a finite product of TA-elements of $L$.
\end{proof}

\begin{remark}\label{Remark 3.13}
Let $(L,m)$ be a quasi-local principal element TAFL domain. Then every proper element of $L$ is a power of $m$.
\end{remark}

\begin{proof}
We know that a TA-element of $L$ equals $m$ or $m^2$ by \cite[Theorem 9]{JaTeYe14}. This completes the proof.
\end{proof}

\section[OAFLs]{OAFLs and their generalizations}\label{4}

In this section, we study the factorization of elements of $L$ with respect to the OA-elements, similar to the previous section. It consists of three parts. The first part involves C-lattices whose elements possess an OA-factorization, called {\it OAFLs}. Next, we examine C-lattices whose compact elements have a factorization into OA-elements, called {\it COAFLs}. Finally, we explore the C-lattices whose principal elements have a factorization into OA-elements, called {\it POAFLs}. It can easily be shown that every OAFL is a COAFL and every COAFL is a POAFL. We continue by presenting some results related to OAFLs.

\begin{example}\label{Example 4.1}
Let $L$ be the lattice of ideals of $\mathbb{Z}[2i]$. Note that $(2+2i)$ has no OA-factorization. For a more general example, consider the lattice $L$ of ideals of a non-integrally closed, non-quasi-local domain. It is a simple consequence of \cite[Theorem 46.7]{Gi92} that $L$ is not a POAFL.	
\end{example}

\begin{remark}\label{Remark 4.2}
Let $L$ be an OAFL, let $r\in L$ and let $S\subseteq L_*$ be multiplicatively closed.
\begin{enumerate}
\item[(1)] $L$ is both a Q-lattice and a TAFL.
\item[(2)] $\min(x)$ is finite for each $x\in L$.
\item[(3)] $L/r$ is an OAFL.
\item[(4)] $L_S$ is an OAFL.
\end{enumerate}
\end{remark}

\begin{proof}
\medskip
(1) Since every OA-element of $L$ is a TA-element and a primary element, we have that $L$ is both a Q-lattice and a TAFL.

\medskip
(2) This follows from (1) and Remark~\ref{Remark 3.2}(1).

\medskip
(3) This follows along similar lines as in Remark~\ref{Remark 3.2}(3).

\medskip
(4) Here we can mimic the proof of Remark~\ref{Remark 3.2}(4).
\end{proof}

\begin{corollary}\label{Corollary 4.3}
Let $L$ be a principally generated OAFL. Then $\dim(L)\leq 1$.
\end{corollary}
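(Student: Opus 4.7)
The plan is to reduce this to the already-established dimension bound for TAFLs. By Remark~\ref{Remark 4.2}(2), every OAFL is a TAFL, since every OA-element is in particular a TA-element. Therefore if $L$ is a principally generated OAFL, then $L$ is a principally generated TAFL, and Proposition~\ref{Proposition 3.4} directly yields $\dim(L)\leq 1$.

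There is no real obstacle here; the corollary is essentially a one-line consequence of two results that have already been proved. I would simply write: ``By Remark~\ref{Remark 4.2}(2), $L$ is a TAFL. The claim now follows from Proposition~\ref{Proposition 3.4}.''
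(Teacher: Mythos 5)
Your proof is correct and is exactly the paper's own argument: the paper also deduces the corollary from Remark~\ref{Remark 4.2}(2) (an OAFL is a TAFL, since OA-elements are TA-elements) combined with Proposition~\ref{Proposition 3.4}. Nothing further is needed.
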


\begin{proof}
This is an immediate consequence of Remark~\ref{Remark 4.2}(1) and Proposition~\ref{Proposition 3.4}.
\end{proof}

\begin{proposition}\label{Proposition 4.4}
Let $(L,m)$ be a quasi-local principally generated C-lattice such that $m^2$ is comparable and $(m$ is nilpotent or $L$ is a lattice domain with $\bigwedge_{n\in\mathbb{N}} m^n=0)$. Then $L$ is an OAFL and every principal element of $L$ is a finite product of principal OA-elements.
\end{proposition}

\begin{proof}
It is an immediate consequence of Theorem~\ref{Theorem 3.11} and Proposition~\ref{Proposition 3.12} that $L$ is a TAFL, and thus $\dim(L)\leq 1$ by Proposition~\ref{Proposition 3.4}. Now it is clear that for each proper nonprime element $x\in L$, it follows that $\sqrt{x}=m$. Therefore, every TA-element of $L$ is an OA-element by Proposition~\ref{Proposition 2.14}, and hence $L$ is an OAFL. It follows by analogy from Theorem~\ref{Theorem 3.11} and Propositions~\ref{Proposition 2.14} and~\ref{Proposition 3.12} and their proofs together with \cite[Theorem 9]{AnJo96} that every principal element of $L$ is a finite product of principal OA-elements.
\end{proof}

Next we study COAFLs.

\begin{remark}\label{Remark 4.5}
Let $L$ be a COAFL, let $r\in L_*$ and let $S\subseteq L_*$ be multiplicatively closed.
\begin{enumerate}
\item[(1)] $L$ is a CTAFL.
\item[(2)] $\min(x)$ is finite for each $x\in L_*$.
\item[(3)] $L/r$ is a COAFL.
\item[(4)] $L_S$ is a COAFL.
\end{enumerate}
\end{remark}

\begin{proof}
(1) This is clear, since every OA-element of $L$ is a TA-element.

\medskip
(2) This follows from (1) and Remark~\ref{Remark 3.6}(1).

\medskip
(3) Here one can mimic the proof of Remark~\ref{Remark 3.6}(3).

\medskip
(4) This can be proved along similar lines as in Remark~\ref{Remark 3.6}(4).
\end{proof}

\begin{proposition}\label{Proposition 4.6}
Let $(L,m)$ be a quasi-local C-lattice and let $G\subseteq L$ be such that for each $x\in L$, there is some $F\subseteq G$ with $x=\bigvee F$. Suppose that every $g\in G$ has an OA-factorization. Then each minimal nonmaximal prime element of $L$ is a weak meet principal element.
\end{proposition}

\begin{proof}
Let $p\in L$ be a minimal nonmaximal prime element. First we show that $x=p(x:p)$ for each element $x\in G$ with $x\leq p$. Let $x\in G$ with $x\leq p$. By the assumption, we can write $x=\prod_{i=1}^n x_i$ where $n$ is a positive integer and $x_i$ is an OA-element of $L$ for each $i\in [1,n]$. Since $x\leq p$, we have that $x_i\leq p$ for some $i\in [1,n]$. If $x_i$ is not a prime element, then $m^2\leq x_i\leq p$ by Proposition~\ref{Proposition 2.7}. We infer that $m=p$, a contradiction. Consequently, $x_i$ is a prime element, and hence $x_i=p$. We have that $x=pz$, where $z=\prod_{k=1,k\not=i}^n x_k$. Observe that $x=p(x:p)$.

\medskip
Let $y\in L$ be such that $y\leq p$. We have that

\[
y=\bigvee\{v\in G\mid v\leq y\}=\bigvee\{p(v:p)\mid v\in G,v\leq y\}=p\bigvee\{(v:p)\mid v\in G,v\leq y\}.
\]

Set $w=\bigvee\{(v:p)\mid v\in G,v\leq y\}$. Then $w\in L$ and $y=pw$, and thus $p$ is weak meet principal.
\end{proof}

\begin{corollary}\label{Corollary 4.7}
Let $(L,m)$ be a quasi-local COAFL. Then each minimal nonmaximal prime element of $L$ is a weak meet principal element.
\end{corollary}

\begin{proof}
This follows from Proposition~\ref{Proposition 4.6}.
\end{proof}

\begin{corollary}\label{Corollary 4.8}
Let $(L,m)$ be a quasi-local principally generated POAFL. Then every minimal nonmaximal prime element of $L$ is a principal element.
\end{corollary}

\begin{proof}	
It is clear from Proposition~\ref{Proposition 4.6} that every minimal nonmaximal prime element of $L$ is a weak meet principal element. Therefore, every minimal nonmaximal prime element of $L$ is a principal element by \cite[Theorem 1.2]{An76}.
\end{proof}

Next we study POAFLs. We start with a simple observation.

\begin{remark}\label{Remark 4.9}
Let $L$ be a Pr\"ufer lattice. Then $L$ is a COAFL if and only if $L$ is a POAFL.
\end{remark}

\begin{proof}
This is obvious, since every compact element in a Pr\"ufer lattice is principal.
\end{proof}

By \cite[Corollary 4.10]{AnJa96} we obtain that a non-quasi-local domain has a COAFL ideal lattice if and only if it is Dedekind.

\begin{lemma}\label{Lemma 4.10}
Let $(L,m)$ be a quasi-local principally generated C-lattice. If the join of any two principal elements of $L$ has an OA-factorization, then every nonmaximal prime element of $L$ is a principal element and $\dim(L)\leq 2$.
\end{lemma}

\begin{proof}
Let the join of any two principal elements of $L$ have an OA-factorization. First, we show that $\dim(L)\leq 2$. Let $p\in L$ be a nonmaximal prime element of $L$. Note that $L_p$ is quasi-local and $L_p$ is generated by the set of elements $\Omega:=\{a_p\mid a\in L$ is principal$\}$. Now we prove that the join of any two elements of $\Omega$ has a prime factorization. Let $z\in L_p$ be proper such that $z$ is the join of two elements of $\Omega$. Then there are some principal elements $a,b\in L$ such that $a\vee b\leq p$ and $z=a_p\vee_p b_p$. Obviously, there are some $n,m\in\mathbb{N}$ and OA-elements $(x_i)_{i=1}^m$ of $L$ such that $n\leq m$, $x_j\leq p$ for each $j\in [1,n]$, $x_j\nleq p$ for each $j\in [n+1,m]$ and $a\vee b=\prod_{i=1}^m x_i$. We infer by Proposition~\ref{Proposition 2.7} that $x_i$ is a prime element of $L$ for each $i\in [1,n]$. This implies that $z=a_p\vee_p b_p=(a\vee b)_p=(\prod_{i=1}^{n} x_i)_p=\bigcirc_{i=1}^{n} (x_i)_p$ where $(x_j)_p$ is a prime element of $L_p$ for each $j\in [1,n]$. Consequently, $L_p$ is a ZPI-lattice by \cite[Theorem 8]{Ja02}, and thus $\dim(L_p)\leq 1$ by \cite[Theorem 2.6]{AnJa96}. Therefore, $\dim(L)\leq 2$.

\medskip
Let $q\in L$ be a nonmaximal prime element. If $q$ is minimal, then it follows from Corollary~\ref{Corollary 4.8} that $q$ is principal. Now let $q$ be not minimal. Assume that $qm=q$.  There is a $p\in\min(0)$ such that $p<q$, and thus there is a principal element $c\in L$ such that $c\nleq p$ and $c\leq q$. By assumption, $c\vee p$ has an OA-factorization, since $p$ is principal by Corollary~\ref{Corollary 4.8}. Since $\dim(L)\leq 2$, we have that $q$ is minimal over $c\vee p$. Observe that $c\vee p=\prod_{i=1}^n x_i$ where $x_i$ is an OA-element for each $i\in [1,n]$. We get that $x_j\leq q$ for some $j\in [1,n]$. Note that $m^2\nleq x_j$, and hence $x_j=q$ by Proposition~\ref{Proposition 2.7}. We infer that $c\vee p=q\ell$ for some $\ell\in L$. We conclude that $(c\vee p)m=c\vee p$ by the assumption, and thus $c\vee p=0$ by \cite[Theorem 1.4]{An76}. Consequently, $c=p=0$, which contradicts the fact that $c\nleq p$.

\medskip
It follows that $q\neq qm$. Since $L$ is principally generated, there is a principal element $a\in L$ such that $a\leq q$ and $a\nleq qm$. Since $q$ is nonmaximal and $L$ is principally generated, there is a principal element $b\in L$ such that $b\leq m$ and $b\nleq q$. It remains to show that $x\leq a$ for each principal element $x\in L$ such that $x\leq q$. (Then $q=a$ is principal, since $L$ is principally generated and $a\leq q$.) Let $x\in L$ be principal such that $x\leq q$. Note that $xb^2$ is principal, and thus $a\vee xb^2$ is the join of two principal elements of $L$. Consequently, $a\vee xb^2$ has an OA-factorization. Since $a\vee xb^2\leq p$, there are $v,w\in L$ such that $v$ is an OA-element of $L$, $v\leq q$ and $a\vee xb^2=vw$. If $w$ is proper, then $w\leq m$, and hence $a\leq a\vee xb^2\leq qm$, a contradiction. Therefore, $a\vee xb^2=v$ is an OA-element. Assume that $b^2\leq a\vee xb^2$. Since $b^2$ is principal, we have that $1=(a\vee xb^2:b^2)=(a:b^2)\vee x$, and thus $(a:b^2)=1$, since $L$ is quasi-local. Consequently, $b^2\leq a\leq q$, and hence $b\leq q$, a contradiction. This implies that $b^2\nleq a\vee xb^2$. Since $xb^2\leq a\vee xb^2$ and $a\vee xb^2$ is an OA-element, we conclude that $x\leq a\vee xb^2$. By \cite[Theorem 1.4]{An76}, it follows that $x\leq a$.
\end{proof}

\begin{proposition}\label{Proposition 4.11}
Let $L$ be a principally generated C-lattice such that the join of any two principal elements of $L$ has an OA-factorization. Then $\dim(L)\leq 1$ and if $\dim(L)=1$ and $L$ is quasi-local, then $L$ is a domain.
\end{proposition}

\begin{proof} First let every OA-element of $L$ be a prime element. We infer that $L$ is a ZPI-lattice and every element of $L$ is principal by \cite[Theorem 8]{Ja02}. Therefore, $\dim(L)\leq 1$ by \cite[Theorem 2.6]{AnJa96}. For the rest of this paragraph, let $\dim(L)=1$ and let $L$ be quasi-local with maximal element $m$. Clearly, there is a nonmaximal prime element $p\in L$. Since $m$ is principal and $p<m$, we conclude that $p=pm$. Consequently, $p=0$ by \cite[Theorem 1.4]{An76}, and thus $L$ is a domain.

\medskip
Now let there be an OA-element that is not a prime element. Then $L$ is quasi-local by Proposition~\ref{Proposition 2.6}. Let $m$ be the maximal element of $L$. We infer by Proposition~\ref{Proposition 2.7} that $m\neq m^2$. There exists some principal element $c\in L$ such that $c\nleq m^2$ and $c\leq m$.

\medskip
Claim: $p\leq m^2$ for each nonmaximal prime element $p\in L$.

\medskip
Let $p\in L$ be a nonmaximal prime element. By Lemma~\ref{Lemma 4.10}, we have that $\dim(L)\leq 2$. Therefore, we can assume without restriction that there are no prime elements of $L$ that are properly between $p$ and $m$ (i.e., for each prime element $r\in L$ with $p\leq r\leq m$, it follows that $r\in\{p,m\}$).

Next we show that $p=\bigwedge\{p\vee a\mid a\in L$ is principal and $a\nleq p\}$. Assume to the contrary that $p\not=\bigwedge\{p\vee a\mid a\in L$ is principal and $a\nleq p\}$. Then there is a principal element $y\in L$ such that $y\nleq p$ and $y\leq\bigwedge\{p\vee a\mid a\in L$ is principal and $a\nleq p\}$. Since $p<m$, there is a principal element $b\in L$ such that $b\nleq p$ and $b\leq m$. Since $y^2$ is principal and $y^2\nleq p$, we have that $y\leq p\vee y^2$, and hence $1=((p\vee y^2):y)=(p:y)\vee y=p\vee y$. It follows that $1=y\leq p\vee b\leq m$, a contradiction. Therefore, $p=\bigwedge\{p\vee a\mid a\in L$ is principal and $a\nleq p\}$.

Assume that $p\nleq m^2$. It is sufficient to show that $m^2\leq p\vee a$ for each proper principal $a\in L$ with $a\nleq p$. (Then $m^2\leq\bigwedge\{p\vee d\mid d\in L$ is principal and $d\nleq p\}=p$, and hence $p=m$, a contradiction.) Let $a\in L$ be a proper principal element such that $a\nleq p$. Since $p$ is principal by Lemma~\ref{Lemma 4.10}, it follows that $p\vee a$ has an OA-factorization in $L$. Since $p\vee a\nleq m^2$ and $p\vee a$ is proper, we obtain that $p\vee a$ is an OA-element. Clearly, $p\vee a$ cannot be a nonmaximal prime element, and hence $m^2\leq p\vee a$ by Proposition~\ref{Proposition 2.7}. Consequently, $p\leq m^2$.\qed(Claim)

\medskip
It is sufficient to show that $r=0$ for each nonmaximal prime element $r\in L$. Let $r\in L$ be a nonmaximal prime element. Since $c$ has an OA-factorization and $c\nleq m^2$, we infer that $c$ is an OA-element of $L$. It follows by the claim that $c$ cannot be a nonmaximal prime element of $L$. Observe that $r\leq m^2\leq c$ by the claim and by Proposition~\ref{Proposition 2.7}. Since $c$ is a (weak meet) principal element of $L$, $r$ is a prime element of $L$ and $c\nleq r$, we conclude that $r=cr$. Therefore, $r=0$ by \cite[Theorem 1.4]{An76}.
\end{proof}

\begin{proposition}\label{Proposition 4.12}
Let $L$ be a principally generated C-lattice and set $m={\rm J}(L)$. If the join of any two principal elements of $L$ has an OA-factorization, then $L$ satisfies one of the following conditions.
\begin{enumerate}
\item[(a)] $L$ is a ZPI-lattice.
\item[(b)] $L$ is a quasi-local lattice, $m^2$ is comparable and $m$ is a nilpotent element.
\item[(c)] $L$ is a quasi-local lattice domain, $m^2$ is comparable and $\bigwedge_{n\in\mathbb{N}} m^n=0$.
\end{enumerate}
\end{proposition}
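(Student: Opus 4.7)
The plan is to split into two main cases according to whether every OA-element of $L$ is prime. I first observe that $0$ is principal and that every principal $x\in L$ equals $x\vee 0$, so the hypothesis in fact yields an OA-factorization of every principal element, and in particular of $0=0\vee 0$. In the first case, when every OA-element of $L$ is prime, the join of any two principal elements is a finite product of primes, and I would invoke \cite[Theorem 8]{Ja02} (exactly as in the opening lines of the proof of Proposition~\ref{Proposition 4.10}) to conclude that $L$ is a ZPI-lattice, giving condition (a).

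Assume hereafter that some OA-element of $L$ is not prime. Then Proposition~\ref{Proposition 2.7} makes $L$ quasi-local with unique maximal element $m$, and Proposition~\ref{Proposition 4.10} forces $\dim(L)\le 1$. The key technical step will be to prove that $m^2$ is comparable. Since $L$ is principally generated, it suffices to compare $m^2$ with every principal $x\in L$. The cases $x\in\{0,1\}$ are trivial, so I would pick a principal $x$ with $0<x<1$ and an OA-factorization $x=\prod_{i=1}^{n} x_i$ with $n\ge 1$. By Proposition~\ref{Proposition 2.8}, each $x_i$ is either a prime element of $L$ or lies in $[m^2,m)$. Because $x\ne 0$ forbids any factor from being zero, and (under $\dim(L)\le 1$) the only prime other than possibly $0$ is $m$, every factor actually lies in $[m^2,m]$. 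For $n\ge 2$ this forces $x\le x_1 x_2\le m^2$, while for $n=1$ it gives $m^2\le x$. Either way $x$ is comparable with $m^2$.

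In the sub-case $\dim(L)=0$, the only prime of $L$ is $m$, and applying the same analysis to the OA-factorization $0=\prod_{i=1}^{k} x_i$ yields $x_i\in[m^2,m]$ for every $i$, whence $m^{2k}\le 0\le m^k$. Thus $m$ is nilpotent and, combined with the previous step, one obtains (b). In the sub-case $\dim(L)=1$, Proposition~\ref{Proposition 4.11} tells us that $L$ is a lattice domain, and it remains to verify $\bigwedge_{n\in\mathbb{N}} m^n=0$. Assume the contrary and choose a nonzero principal $x\le\bigwedge_{n\in\mathbb{N}} m^n$. An OA-factorization $x=\prod_{i=1}^{k} x_i$ cannot have zero factors since $L$ is a domain, so each $x_i\in[m^2,m]$, producing the sandwich $m^{2k}\le x\le m^{4k}\le m^{2k}$. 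Hence $x=x^2$; using that $x$ is join principal, one gets $1=(x^2:x)=(0:x)\vee x$, and quasi-locality together with $x\le m$ forces $(0:x)=1$, whence $x=0$, a contradiction. This delivers (c), mirroring the corresponding step in the proof of Theorem~\ref{Theorem 3.11}.

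The main obstacle is the comparability of $m^2$: this is the only point at which a structural property of $L$ must be extracted from OA-factorizations of arbitrary principal elements, and the argument hinges on the rigid description of OA-elements in Proposition~\ref{Proposition 2.8} together with the severely restricted prime spectrum available under $\dim(L)\le 1$. Once $m^2$ is known to be comparable, both the nilpotence of $m$ in (b) and the vanishing of $\bigwedge_{n} m^n$ in (c) reduce to short Nakayama-style computations patterned on the proof of Theorem~\ref{Theorem 3.11}.
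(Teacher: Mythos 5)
Your proposal follows the paper's proof almost step for step: the dichotomy on whether every OA-element is prime with \cite[Theorem 8]{Ja02} giving case (a), quasi-locality via Proposition~\ref{Proposition 2.7}, nilpotence of $m$ from an OA-factorization of $0$ in dimension $0$, Proposition~\ref{Proposition 4.11} for the domain property in dimension $1$, and the sandwich $m^{2k}\leq x\leq m^{4k}\leq m^{2k}$ together with the join-principal identity $1=(0:x)\vee x$ to force $\bigwedge_{n\in\mathbb{N}}m^n=0$. Your explicit appeal to Proposition~\ref{Proposition 4.10} for $\dim(L)\leq 1$ merely makes explicit what the paper's split into $\dim(L)=0$ and $\dim(L)=1$ presupposes. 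However, there is one genuine slip in your key comparability step: you justify that every OA-factor of a nonzero principal element lies in $[m^2,m]$ by asserting that ``under $\dim(L)\le 1$ the only prime other than possibly $0$ is $m$.'' That implication is false as stated: a quasi-local lattice of dimension $\leq 1$ which is not a domain can have nonzero nonmaximal (minimal) prime elements --- consider the lattice of ideals of $K[X,Y]/(XY)$ localized at the image of $(X,Y)$ --- and the statement of Proposition~\ref{Proposition 4.10} gives only the dimension bound, not the shape of the prime spectrum. The problematic case is precisely $n=1$ with $x$ itself prime: Proposition~\ref{Proposition 2.8} then permits $x$ to be a nonmaximal prime, which need not satisfy $m^2\leq x$ at that stage of your argument.

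The repair is exactly the paper's ordering, and it costs you nothing you have not already cited. Perform the case split on $\dim(L)$ \emph{before} proving comparability: in dimension $0$ the unique prime is $m$, so a prime factor satisfies $m^2\leq m=x$; in dimension $1$, first invoke Proposition~\ref{Proposition 4.11} (its hypotheses are already available) to conclude that $L$ is a lattice domain, whence the primes are exactly $0$ and $m$, and a nonzero prime OA-factor must be $m\geq m^2$. Note also that the $n\geq 2$ half of your dichotomy needs no information about primes at all, since OA-elements are proper and hence $x\leq x_1x_2\leq m^2$ automatically; only the $n=1$ case requires the spectrum to be pinned down. With this reordering your proof is complete and coincides in substance with the paper's.
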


\begin{proof}
Let the join of any two principal elements of $L$ have an OA-factorization. Assume that every OA-element of $L$ is prime. By \cite[Theorem 8]{Ja02}, it follows that $L$ is a ZPI-lattice. Now, assume that there is an OA-element which is not a prime. We conclude that $(L,m)$ is quasi-local with $m^2\neq m$ by Propositions~\ref{Proposition 2.6} and~\ref{Proposition 2.7}. There is a proper principal element $x\in L$ such that $x\nleq m^2$. Clearly, $x$ is not the product of more than one OA-element, and thus $x$ is an OA-element. It follows from Proposition~\ref{Proposition 4.11} that $\dim(L)\leq 1$.

\medskip
First let $\dim(L)=0$. We show that $m^2$ is comparable. Let $z\in L$ be proper such that $z\nleq m^2$. There is a principal element $a\in L$ with $a\leq z$ and $a\nleq m^2$. Note that $a$ is an OA-element which is not a prime element. We have that $m^2\leq a$ by Proposition~\ref{Proposition 2.7}, and thus $m^2\leq z$. Consequently, $m^2$ is comparable. Clearly, $0$ has an OA-factorization. This implies that $m^k\leq 0$ for some positive integer $k$, and hence $m^k=0$.

\medskip
Now let $\dim(L)=1$. We obtain that $(L,m)$ is a quasi-local lattice domain by Proposition~\ref{Proposition 4.11}. To verify that $\bigwedge_{n\in\mathbb{N}} m^n=0$, assume the contrary that $\bigwedge_{n\in\mathbb{N}} m^n\neq 0$. There is a nonzero principal element $x\in L$ with $x\leq\bigwedge_{n\in\mathbb{N}} m^n$. Since $x$ is a product of $k$ OA-elements of $L$, we conclude that $m^{2k}\leq x\leq m^{4k}\leq m^{2k}$. In particular, we get that $x=x^2$. Since $x$ is principal, we have that $1=x\vee (0:x)$. We infer that $x=1$, a contradiction. Therefore, $\bigwedge_{n\in\mathbb{N}} m^n=0$. Finally, we show that $m^2$ is comparable. Let $z\in L$ be proper such that $z\nleq m^2$. There is a principal element $a\in L$ with $a\leq z$ and $a\nleq m^2$. We get that $a$ is a nonzero OA-element. Therefore, $m^2\leq a$ (since $\dim(L)=1$), and thus $m^2\leq z$.
\end{proof}	

\begin{theorem}\label{Theorem 4.13}
Let $L$ be a principally generated C-lattice and set $m={\rm J}(L)$. The following statements are equivalent.
\begin{enumerate}
\item[(1)] $L$ is an OAFL.
\item[(2)] $L$ is a COAFL.
\item[(3)] The join of any two principal elements of $L$ has an OA-factorization.
\item[(4)] $L$ satisfies one of the following conditions.
\begin{enumerate}
\item[(a)] $L$ is a ZPI-lattice.
\item[(b)] $L$ is a quasi-local lattice, $m^2$ is comparable and $m$ is a nilpotent element.
\item[(c)] $L$ is a quasi-local lattice domain, $m^2$ is comparable and $\bigwedge_{n\in\mathbb{N}} m^n=0$.
\end{enumerate}
\end{enumerate}
\end{theorem}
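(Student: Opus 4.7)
The plan is to prove the cyclic chain of implications $(1)\Rightarrow(2)\Rightarrow(3)\Rightarrow(4)\Rightarrow(1)$, using the preceding results (especially Lemma~\ref{Lemma 4.4} and Proposition~\ref{Proposition 4.12}) to do almost all of the work. The only real content of the theorem is identifying the three structural possibilities in (4) and checking that each of them actually produces an OAFL.

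The implication $(1)\Rightarrow(2)$ is immediate from the definitions, since compact elements are elements. For $(2)\Rightarrow(3)$, the key observation is that if $a,b\in L$ are principal, then $a$ and $b$ are compact (by \cite[Theorem 1.3]{An76} as recalled in Section~\ref{1}), and a finite join of compact elements is compact; hence $a\vee b$ is a compact element of $L$ and therefore admits an OA-factorization by assumption.

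For $(3)\Rightarrow(4)$, I would simply invoke Proposition~\ref{Proposition 4.12}, which gives exactly this implication for principally generated C-lattices. This step requires no further argument.

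For $(4)\Rightarrow(1)$ I would split into the three cases of (4). In case (a), $L$ is a ZPI-lattice, so every element is a finite product of prime elements; since every prime element is an OA-element (by the remarks after Definition~\ref{Definition 2.1}), every element has an OA-factorization, so $L$ is an OAFL. In cases (b) and (c), the hypotheses on $(L,m)$ are precisely the two alternatives of Lemma~\ref{Lemma 4.4} (quasi-local principally generated with $m^2$ comparable and either $m$ nilpotent or $L$ a lattice domain with $\bigwedge_{n\in\mathbb{N}}m^n=0$), so Lemma~\ref{Lemma 4.4} immediately yields that $L$ is an OAFL. This closes the cycle and completes the proof.

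The main obstacle I anticipate is essentially bookkeeping rather than new mathematics: one must verify that the hypotheses of Lemma~\ref{Lemma 4.4} and Proposition~\ref{Proposition 4.12} line up verbatim with the formulation of~(4), and that the trivial-looking step $(2)\Rightarrow(3)$ really does rest on the compactness of principal elements in a C-lattice (which is crucial here because COAFL speaks only about compact elements, while (3) speaks about joins of principal ones).
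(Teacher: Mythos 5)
Your proposal is correct and follows essentially the same route as the paper: the same cycle $(1)\Rightarrow(2)\Rightarrow(3)\Rightarrow(4)\Rightarrow(1)$, with $(2)\Rightarrow(3)$ resting on compactness of principal elements, $(3)\Rightarrow(4)$ delegated to Proposition~\ref{Proposition 4.12}, and $(4)\Rightarrow(1)$ handled by the ZPI case plus Lemma~\ref{Lemma 4.4} for cases (b) and (c). The only cosmetic difference is that you spell out why a ZPI-lattice is an OAFL (primes are OA-elements), which the paper leaves as ``clearly''.
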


\begin{proof}
(1) $\Rightarrow$ (2) This is obvious.

\medskip
(2) $\Rightarrow$ (3) Note that every principal element is compact, and hence the join of each two principal elements is compact. The statement is now immediately clear.

\medskip
(3) $\Rightarrow$ (4) This follows from Proposition~\ref{Proposition 4.12}.

\medskip
(4) $\Rightarrow$ (1) If $L$ is a ZPI-lattice, then clearly $L$ is an OAFL. Now let $L$ be not a ZPI-lattice. It is an immediate consequence of Proposition~\ref{Proposition 4.4} that $L$ is an OAFL.
\end{proof}

\begin{theorem}\label{Theorem 4.14}
Let $L$ be a principally generated C-lattice. The following statements are equivalent.
\begin{enumerate}
\item[(1)] $L$ is a ZPI-lattice.
\item[(2)] $L$ is a Pr\"ufer OAFL.
\item[(3)] $L$ is a Pr\"ufer POAFL.
\end{enumerate}
\end{theorem}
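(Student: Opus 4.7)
The argument proceeds by proving $(1)\Rightarrow(2)$, $(2)\Rightarrow(3)$, and the main implication $(3)\Rightarrow(1)$. For $(1)\Rightarrow(2)$, since every prime element is an OA-element, a ZPI-factorization is automatically an OA-factorization, making $L$ an OAFL; furthermore, every element of a ZPI-lattice is principal (cf.\ \cite{AnJa96}), so in particular every compact element is principal and $L$ is Pr\"ufer. The implication $(2)\Rightarrow(3)$ is immediate from the definitions.

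For $(3)\Rightarrow(1)$, I would assume $L$ is a Pr\"ufer POAFL. Since every compact element is principal and $L$ is a C-lattice, $L$ is principally generated. By Remark~\ref{Remark 4.7}, $L$ is a COAFL, so Theorem~\ref{Theorem 4.13} yields that $L$ satisfies one of (a), (b), or (c). In case (a), $L$ is ZPI. In cases (b) and (c), $L$ is quasi-local with maximal element $m$; if $m=m^2$ then nilpotence of $m$ (case (b)) or $\bigwedge_n m^n=0$ (case (c)) forces $m=0$, so $L$ is a field and trivially ZPI. It remains to treat $m\neq m^2$ in cases (b) and (c).

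In this remaining sub-case, the plan is to show that $L$ is a linearly ordered chain of powers of $m$. First, I would establish that the principal elements are linearly ordered: for principals $a,b$, the join $a\vee b$ is a finite join of compacts, hence compact, hence principal by Pr\"ufer; the meet-principal identity gives $a=s(a\vee b)$ and $b=t(a\vee b)$, and $(s\vee t)(a\vee b)=a\vee b$ combined with Nakayama (applied to the compact $a\vee b$, cf.\ \cite[Theorem 1.4]{An76}) forces $s\vee t=1$, which in the quasi-local setting gives $s=1$ or $t=1$. Second, I would prove that $m$ itself is principal by contradiction: otherwise there is a cofinal ascending chain $c_1<c_2<\cdots$ of principals with $\bigvee c_i=m$, which eventually lies in $[m^2,m)$ by comparability of $m^2$, and the meet-principal identity at $c_i$ gives $m^2=(m^2:c_i)c_i=mc_i$ (since $(m^2:c_i)\leq m$), while at the step $c_i<c_{i+1}$ it gives $c_i=(c_i:c_{i+1})c_{i+1}\leq mc_{i+1}=m^2$, a contradiction. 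Finally, with $m$ principal, any principal $c\leq m$ has an OA-factorization $c=z_1\cdots z_l$ with each $z_i\leq m$, so $c\leq m^l$; taking $l$ maximal with this property and applying meet-principal to $c$ and $m^l$ gives $c=(c:m^l)m^l$ with $(c:m^l)=1$ (else $c\leq m^{l+1}$ contradicts maximality), hence $c=m^l$. Being principally generated and linearly ordered, $L$ is therefore a chain of powers of $m$ together with $0$, which is a ZPI-lattice.

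The main obstacle is the Nakayama-type argument showing that $m$ is principal, since this is where the Pr\"ufer hypothesis is used essentially. The argument relies on the meet-principal identity combined with comparability of $m^2$ to extract the relation $m^2=mc_i$, and then on iterating along the ascending chain to force the collapse $c_i\leq m^2$. Without Pr\"ufer this structural rigidity fails, as illustrated by Example~\ref{Example 3.9} for general PTAFLs.
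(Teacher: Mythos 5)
Your proposal is correct, and while the outer structure matches the paper, the heart of the argument --- the quasi-local case of (3) $\Rightarrow$ (1) --- takes a genuinely different route. Both proofs make the same reduction: Remark~\ref{Remark 4.7} turns the Pr\"ufer POAFL into a COAFL, Theorem~\ref{Theorem 4.13} yields alternatives (a)--(c), and outside the quasi-local case OA-elements coincide with primes. But in the quasi-local case the paper finishes in two lines: $m^2$ is comparable by Theorem~\ref{Theorem 4.13}, Proposition~\ref{Proposition 2.12} shows every OA-element is prime or equals $m^2=m\cdot m$, so every principal element is a finite product of primes and $L$ is a ZPI-lattice via \cite[Theorem 8]{Ja02}. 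You instead prove from scratch that $L$ is the chain $1>m>m^2>\cdots$ (together with $0$): principals are linearly ordered, $m$ itself is principal, and every principal element is a power of $m$. Your route is longer but self-contained: it avoids \cite[Theorem 8]{Ja02} in the quasi-local case and, notably, makes explicit the linear ordering of principal elements in a quasi-local Pr\"ufer lattice --- precisely the hypothesis (1) of Proposition~\ref{Proposition 2.12} that the paper's appeal to that proposition tacitly presupposes. Similarly, your direct proof of (1) $\Rightarrow$ (2), using that every element of a ZPI-lattice is principal (a fact the paper itself invokes inside the proof of Theorem~\ref{Theorem 3.5}), replaces the paper's citation of \cite[Theorem 8]{Ja02}.

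Two spots to tighten, neither a genuine gap. First, in your second step you should not posit a countable cofinal chain $c_1<c_2<\cdots$ of principals with join $m$; such a chain need not exist (the cofinality of the linearly ordered set of principals below $m$ may be uncountable). But your argument only uses two consecutive terms: pick one principal $c_1\nleq m^2$ (possible since $m\nleq m^2$) and, assuming $m$ is not principal, one principal $c_2$ with $c_2\nleq c_1$, hence $c_1<c_2$ by linearity (if no such $c_2$ existed, $c_1$ would dominate all principals below $m$, forcing $m=c_1$ principal). Then $c_1=(c_1:c_2)c_2\leq mc_2\leq m^2$ is already the contradiction, and the detour through $m^2=mc_i$ and the eventual membership in $[m^2,m)$ is unnecessary; as written, your ``a contradiction'' also needs one more line, since $c_i\leq m^2$ together with $m^2\leq c_i$ only gives $c_i=m^2$, and one must then note the chain is eventually constant, contradicting strict ascent. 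Second, in the final step the maximal $l$ with $c\leq m^l$ exists only for $c\neq 0$ --- by nilpotence of $m$ in case (b) and by $\bigwedge_{n\in\mathbb{N}}m^n=0$ in case (c) --- while $c=0$ equals $m^n$ in case (b) and is prime in case (c); your closing sentence handles this only implicitly, so spell it out.
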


\begin{proof}
(1) $\Rightarrow$ (2) $\Rightarrow$ (3) This follows from \cite[Theorem 8]{Ja02}.

\medskip
(3) $\Rightarrow$ (1) Let $L$ be a Pr\"ufer POAFL. If $L$ is not quasi-local, then the prime elements coincide with the OA-elements. By Theorem~\ref{Theorem 4.13}, we infer that $L$ is a ZPI-lattice. Assume that $(L,m)$ is a quasi-local lattice with maximal element $m$. Then $m^2$ is comparable by Theorem~\ref{Theorem 4.13}. We know from Proposition~\ref{Proposition 2.13}(3) that each OA-element is either prime or equal to $m^2$. Therefore, $L$ is a ZPI-lattice.
\end{proof}

Finally, we provide a theorem that connects the various types of factorization lattices for a quasi-local principally generated C-lattice domain.

\begin{theorem}\label{Theorem 4.15}
Let $(L,m)$ be a quasi-local principally generated C-lattice domain. The following statements are equivalent.
\begin{enumerate}
\item[(1)] $L$ is an OAFL.
\item[(2)] $L$ is a TAFL.
\item[(3)] $L$ is a COAFL.
\item[(4)] $L$ is a CTAFL that satisfies the ascending chain condition on prime elements.
\item[(5)] $\dim(L)\leq 1$ and $L$ is a POAFL.
\item[(6)] $\dim(L)\leq 1$ and $L$ is a PTAFL.
\item[(7)] $m^2$ is comparable and $\bigwedge_{n\in\mathbb{N}} m^n=0$.
\end{enumerate}
\end{theorem}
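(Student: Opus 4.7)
The plan is to run a single cycle through all seven statements, relying almost entirely on results already proved in the paper. The backbone equivalence \mbox{$(2)\Leftrightarrow(6)\Leftrightarrow(7)$} is given directly by Theorems~\ref{Theorem 3.10} and~\ref{Theorem 3.11}: Theorem~\ref{Theorem 3.11} provides $(2)\Leftrightarrow(6)$ for a quasi-local principally generated C-lattice domain and also its ``if these hold then $\bigwedge_{n}m^n=0$ and $m^2$ is comparable'' clause, yielding $(2)\Rightarrow(7)$; Theorem~\ref{Theorem 3.10} supplies the converse $(7)\Rightarrow(2)$.

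To attach $(1)$, $(3)$, and $(5)$ to this backbone, I will note the trivial implications $(1)\Rightarrow(3)$, $(1)\Rightarrow(5)$, and $(5)\Rightarrow(6)$ (every OAFL is a COAFL and a POAFL, every POAFL is a PTAFL since OA-elements are TA-elements, and Corollary~\ref{Corollary 4.3} yields $\dim(L)\leq 1$). For $(3)\Rightarrow(1)$ I will cite Theorem~\ref{Theorem 4.13}, which proves the equivalence of OAFL and COAFL for every principally generated C-lattice. Finally, for $(7)\Rightarrow(1)$ I will invoke Lemma~\ref{Lemma 4.4}: since $L$ is assumed to be a lattice domain with $m^2$ comparable and $\bigwedge_{n}m^n=0$, the hypotheses of the lemma are met and $L$ is an OAFL.

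It remains to fold in $(4)$. For $(1)\Rightarrow(4)$, the CTAFL part is immediate because OA-factorizations of compact elements are TA-factorizations. The ACC on prime elements is automatic: Corollary~\ref{Corollary 4.3} gives $\dim(L)\leq 1$, and since $L$ is a quasi-local lattice domain the set of prime elements is at most $\{0,m\}$, which is finite. For $(4)\Rightarrow(2)$, Proposition~\ref{Proposition 3.7} shows that a principally generated CTAFL satisfying the ACC on prime elements has $\dim(L)\leq 1$, and every CTAFL is trivially a PTAFL, so the already-established $(6)\Rightarrow(2)$ of Theorem~\ref{Theorem 3.11} finishes the step.

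I do not anticipate a real obstacle; this is essentially a bookkeeping result that packages Theorems~\ref{Theorem 3.10}, \ref{Theorem 3.11}, \ref{Theorem 4.13}, Proposition~\ref{Proposition 3.7}, Corollary~\ref{Corollary 4.3}, and Lemma~\ref{Lemma 4.4}. The only place where one must be slightly careful is the verification that the ACC on prime elements in $(4)$ is harmless in the OAFL direction, and there the observation that a quasi-local principally generated C-lattice domain of dimension at most one has only the primes $0$ and $m$ makes the point transparent.
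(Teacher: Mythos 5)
Your proof is correct and takes essentially the same route as the paper's: both are bookkeeping assemblies of Theorem~\ref{Theorem 4.13} (with Lemma~\ref{Lemma 4.4}) to attach (1) and (3) to (7), Corollary~\ref{Corollary 4.3} for $(1)\Rightarrow(5)\Rightarrow(6)$, Theorem~\ref{Theorem 3.11} for the TAFL/PTAFL backbone, and Proposition~\ref{Proposition 3.7} for $(4)\Rightarrow(6)$. The only differences are cosmetic: you cite Theorem~\ref{Theorem 3.10} for $(7)\Rightarrow(2)$ where the paper invokes Proposition~\ref{Proposition 3.12} (your reference is in fact the apter one, since $L$ is a domain here), and you spell out the ACC-on-primes verification in $(1)\Rightarrow(4)$ (a quasi-local domain with $\dim(L)\leq 1$ has only the primes $0$ and $m$), a step the paper dismisses as clear.
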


\begin{proof}
(1) $\Leftrightarrow$ (3) $\Leftrightarrow$ (7) This follows from Theorem~\ref{Theorem 4.13}.

\medskip
(1) $\Rightarrow$ (5) $\Rightarrow$ (6) This follows from Corollary~\ref{Corollary 4.3}.

\medskip
(2) $\Leftrightarrow$ (6) $\Leftrightarrow$ (7) This is an immediate consequence of Theorems~\ref{Theorem 3.10} and~\ref{Theorem 3.11}.

\medskip
(2) $\Rightarrow$ (4) Clearly, $L$ is a CTAFL. Moreover, $\dim(L)\leq 1$ by Proposition~\ref{Proposition 3.4}. It is clear now that $L$ satisfies the ascending chain condition on prime elements.

\medskip
(4) $\Rightarrow$ (6) Obviously, $L$ is a PTAFL. We infer by Proposition~\ref{Proposition 3.7} that $\dim(L)\leq 1$.
\end{proof}

{\bf ACKNOWLEDGEMENTS.} We want to thank the referee for reading our manuscript very carefully and for a multitude of corrections and suggestions that substantially improved the quality of our manuscript. The first-named author was supported by the Austrian Science Fund FWF, Project Number P36742-N. The second-named author was supported by Scientific and Technological Research Council of Turkey (TUBITAK) 2219-International Postdoctoral Research Fellowship Program for Turkish Citizens, (Grant No: 1059B192202920).

\end{document}